\newcommand{\ds}{\displaystyle}
\newtheorem{thm}{Theorem}
\newtheorem{cor}[thm]{Corollary}
\theoremstyle{definition}
\theoremstyle{remark}
\newtheorem{rem}[thm]{Remark}
\newtheorem{exmp}[thm]{Example}
\title[Bumping sequences and multispecies juggling]
{Bumping sequences and multispecies juggling}
\author[A. Ayyer]{Arvind Ayyer}
\address{Department of Mathematics, Indian Institute of Science, Bangalore - 560012, India}
\email{arvind@math.iisc.ernet.in}
\author[J. Bouttier]{J\'er\'emie Bouttier}
\address{Institut de Physique Th\'eorique, Universit\'e Paris-Saclay, CEA, CNRS, F-91191 Gif-sur-Yvette and D\'epartement de Math\'ematiques et Applications, \'Ecole normale sup\'erieure, 45 rue d'Ulm, F-75231 Paris Cedex 05}
\email{jeremie.bouttier@ipht.fr}
\author[S. Corteel]{Sylvie Corteel}
\address{LIAFA, CNRS et Universit\'e Paris Diderot, Case 7014, F-75205 Paris Cedex 13, France}
\email{sylvie.corteel@liafa.univ-paris-diderot.fr}
\author[S. Linusson]{Svante Linusson}
\address{Department of Mathematics, KTH-Royal Institute of Technology, SE-100 44, Stockholm, Sweden}
\email{linusson@math.kth.se}
\author[F. Nunzi]{Fran\c cois Nunzi}
\address{LIAFA, CNRS et Universit\'e Paris Diderot, Case 7014, F-75205 Paris Cedex 13, France}
\email{fnunzi@liafa.univ-paris-diderot.fr}
\thanks{The first author (A.A.) acknowledges support from a UGC Centre
  for Advanced Study grant and the Department of Science and Technology 
  grant DST/INT/SWD/VR/P-01/2014, and
  thanks LIAFA for hospitality during his stay there, where this work
  was initiated. J.B.\ and S.C.\ acknowledge financial support from
  the Agence Nationale de la Recherche via the grants ANR-08-JCJC-0011
  ``IComb'', ANR 12-JS02-001-01 ``Cartaplus'' and ANR-14-CE25-0014
  ``GRAAL'', and from the ``Combinatoire \`a Paris'' project funded by
  the City of Paris. S.L.\ acknowledges support from the Swedish
  Research Council, grant 621-2014-4780. F.N.\ acknowledges support
  from the Raman-Charpak Fellowship programme.}
\keywords{Markov chains, Combinatorics, Juggling}
\begin{document}

\begin{abstract}
  Building on previous work by four of us (ABCN), we consider further
  generalizations of Warrington's juggling Markov chains. We first
  introduce ``multispecies'' juggling, which consist in having balls
  of different weights: when a ball is thrown it can possibly bump
  into a lighter ball that is then sent to a higher position, where it
  can in turn bump an even lighter ball, etc.  We both study the case
  where the number of balls of each species is conserved and the case
  where the juggler sends back a ball of the species of its choice.
  In this latter case, we actually discuss three models: add-drop,
  annihilation and overwriting. The first two are generalisations of
  models presented in (ABCN) while the third one is new and its Markov
  chain has the ultra fast convergence property. We finally consider
  the case of several jugglers exchanging balls.  In all models, we
  give explicit product formulas for the stationary probability and
  closed form expressions for the normalisation factor if known.
\end{abstract}

\maketitle

\section{Introduction}
\label{sec:in}

Several Markov chains studied in nonequilibrium 
statistical physics are known to have, despite
nontrivial dynamics, an explicit and sometimes remarkably simple
stationary state. The most famous examples of these are one-dimensional models of hopping particles such as the asymmetric exclusion process \cite{Derrida1998}, where the stationary state satisfies the so-called matrix product representation \cite{BlytheEvans} and the zero-range process, where the stationary state is factorised \cite{EvansHanney}. 
The main reason for this simplicity is the underlying combinatorial structure
of these processes. 
Of the two examples mentioned above, a variant of the former known as the totally asymmetric simple exclusion process (TASEP), solved first in \cite{DJLS93}, has a rich combinatorial structure even when the system is generalised to include several types of particles. The latter system is known as the {\em multispecies TASEP}, and its stationary state has an explicit solution which comes from queueing theory \cite{FM07}.

The multispecies TASEP has the further exceptional property that the stationary state can also be calculated if the hopping probabilities of particles depend on their location, known as the {\em inhomogeneous
multispecies TASEP}. This was first done for the three-species case in \cite{AyyerLinusson} and the result for arbitrary species has been announced in \cite{LinussonMartin}. While the stationary state of the general inhomogeneous multispecies TASEP has an explicit description in principle, the actual formulas for the stationary probabilities can be considerably complicated.

In this paper, we will first study the multispecies variants of the basic juggling process introduced in \cite{Warrington} then extended to their inhomogeneous versions 
in \cite{EngstromLeskelaVarpanen,ABCN}. In contrast to the TASEP, as 
we will show in Theorem~\ref{thm:multijugstat}, the stationary probabilities and the partition function have elegant and compact expressions.
We then study the multispecies variants of two other juggling processes, which were also introduced in \cite{Warrington}, where the number of balls of each type can vary. 
In all of these cases, we prove analogous results; see Theorems~\ref{thm:adstat} and \ref{thm:anstat}. We also introduce a new model  where the number of balls of each type can vary
that we call the overwriting model. This model has the nice property that it converges to its stationary distribution in deterministic finite time.
In probabilistic language, this is equivalent to saying that the overwriting model has a deterministic strong stationary time.

The rest of the paper is organized as follows. In
Section~\ref{sec:multijug}, we discuss in some detail the first model,
the so-called Multispecies Juggling Markov Chain (MSJMC):
Section~\ref{sec:multijugbasic} provides its definition and the
expression for its stationary distribution, and
Section~\ref{sec:multijugenrich} is devoted to the enriched chain.
Other models with a fluctuating number of balls of each type (but with
a finite state space) are considered in Section \ref{sec:multiadd}: we
introduce the multispecies extension of the add-drop and the
annihilation models studied in \cite{ABCN} in the respective Sections
\ref{sec:ad} and \ref{sec:an}. In Section \ref{sec:ov}, we present the
overwriting model. Finally, in Section \ref{sec:sevjug},
we describe another possible extension of the juggling Markov chain of
\cite{Warrington}, that involves several jugglers.

\begin{rem} \label{rem:lumping}
Our proofs were mainly obtained by a classic combinatorial approach
which consists of introducing an enriched chain whose stationary
distribution is simpler, and which yields the original chain by a
projection or ``lumping'' procedure, see e.g.~\cite[Section
2.3.1]{LevinPeresWilmer}. Let us summarize this strategy. Suppose 
we have a Markov chain on the state space $S$ (which will be a finite
set in all cases considered here), with transition matrix $P$ (which
is a matrix with rows and columns indexed by $S$, such that all rows
sum to $1$), and for which we want to find the stationary
distribution, namely the (usually unique) row vector $\pi$ whose
entries sum up to $1$ and such that $\pi P = \pi$. The idea is to
introduce another ``enriched'' Markov chain on a larger state space
$\tilde{S}$ with transition matrix $\tilde{P}$, which has the two
following properties:
\begin{itemize}
\item its stationary distribution $\tilde{\pi}$ is ``easy'' to find
  (for instance we may guess and then check its general form because its
  entries are integers with nice factorisations, or monomials in some
  parameters of the chain),
\item it \emph{projects} to the original Markov chain in the sense that
  there exists an equivalence relation $\sim$ over $\tilde{S}$ such
  that $S$ can be identified with $\tilde{S}/\sim$ (i.e.\ the set of
  equivalence classes of $\sim$), and such that the \emph{lumping
    condition}
  \begin{equation}
    \label{eq:lumping}
    \sum_{y'\sim y} \tilde{P}_{x,y'} = P_{[x],[y]}
  \end{equation}
  is satisfied for all $x,y$ in $\tilde{S}$, where $[x]\in S$ denotes the
  equivalence class of $x$.
\end{itemize}
Then, it is straightforward to check that the stationary distribution
$\pi$ of the original Markov chain is given by
\begin{equation}
  \label{eq:pisum}
  \pi_{[x]} = \sum_{x' \sim x} \tilde{\pi}_{x'}.
\end{equation}
In principle, there may be a large number of terms in the
right-hand side of \eqref{eq:pisum}, making the resulting stationary
distribution $\pi$ nontrivial.

For all the juggling models, we  give arguments to show that the Markov chains are aperiodic and irreducible. This implies that their stationary distributions
are unique. For the enriched chains, we do not explicitly prove irreducibility, since the arguments are long-winded and not particularly interesting.
Finding a stationary distribution of the enriched chain and performing the lumping procedure is sufficient to obtain the unique  stationary distribution of the original chain.
\end{rem}

Most of the results of this paper have been previously announced in the conference proceeding \cite{ABLN}.


\section{Multispecies juggling}
\label{sec:multijug}

\subsection{Definition and stationary distribution}
\label{sec:multijugbasic}

The first model that we consider in this paper, and for which we give
the most details, is a ``multispecies'' generalisation of the so-called
Multivariate Juggling Markov Chain (MJMC) \cite{ABCN}. Colloquially
speaking, the juggler is now using balls of different weights, and
when a heavy ball collides with a lighter one, the lighter ball is bumped
to a higher position, where it can itself bump a lighter ball, and so on,
until a ball arrives at the topmost position. Should the reader find
this model unrealistic, she may instead think of a lazy referee
``juggling'' with a stack of papers of varying priorities to review:
every day the referee takes the paper on the top of the stack but,
after spending his time on other duties, decides to postpone it to a
later date, possibly bumping a less important paper further down the
stack, etc. Formally, our \emph{Multispecies Juggling Markov Chain
  (MSJMC)} is defined as follows.

Let $T$ be a fixed positive integer, and $n_1,\ldots,n_T$ be a sequence
of positive integers. The state space $St_{n_1,\ldots,n_T}$ of the
MSJMC is the set of words on the alphabet $\{1,\ldots,T\}$ containing,
for all $i=1,\ldots,T$, $n_i$ occurrences of the letter $i$ (the
letter $1$ represents the heaviest ball and $T$ the lightest
one). Of course those words have length $n=n_1+\cdots+n_T$, and there
are $\binom{n}{n_1,\ldots,n_T}$ different states.

To understand the transitions, it is perhaps best to start with an
example, by considering the word $132132$ (i.e.\ $T=3$,
$n_1=n_2=n_3=2$). The first letter $1$ corresponds to the ball
received by the juggler: it can be thrown either directly to the
rightmost position, i.e.\ to the top (resulting in the
word $321321$), or in the place of any lighter ball. Say we throw it
in place of the first $2$. This $2$ can in turn be thrown either to
the rightmost position (resulting in the word $311322$), or in the place
of a lighter ball on its right: here it can only ``bump'' the second
3, which in turn has no choice but to go to the rightmost position,
resulting in the word $311223$. This latter transition is represented
on Figure~\ref{fig:bump}.

\begin{figure}[ht]
	\centering
		\includegraphics{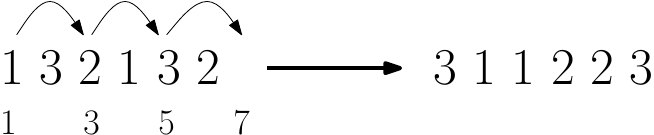}
                \caption{A possible transition from the state
                  $132132$, corresponding to the bumping sequence
                  $(1,3,5,7)$.}
	\label{fig:bump}
\end{figure}

We now give the formal definition of the transitions. Let $w=w_1\cdots
w_n$ be a state in $St_{n_1,\ldots,n_T}$, and set, by convention,
$w_{n+1}=\infty$.  A \emph{bumping sequence} for $w$ is an increasing
sequence of integers $(a(1),\ldots,a(k))$ with length at most $T+1$ such
that $a(1) = 1$, $a(k)=n+1$ and, for all $j$ between $1$ and $k-1$,
$w_{a(j)}<w_{a(j+1)}$ (that is to say, the ball at position $a(j)$ is
heavier than that at position $a(j+1)$). We denote by $\mathcal{B}_w$
the set of bumping sequences for $w$.  For $a \in \mathcal{B}_w$, we
define the state $w^a$ resulting from $w$ via the bumping sequence $a$
by
\begin{equation}
  \label{eq:wupdate}
  w^a_i =
  \begin{cases}
    w_{a(\ell-1)} & \text{if $i=a(\ell)-1$ for some $\ell$,} \\
    w_{i+1} & \text{otherwise,} \\
  \end{cases}
\end{equation}
which is easily seen to belong to $St_{n_1,\ldots,n_T}$. Returning to
the example in Figure~\ref{fig:bump} with $w=132132$, the longest possible bumping
sequence is $a=(1,3,5,7)$ and indeed $w^a=311223$.

We now define the transition probabilities, which means
assigning a probability to each bumping sequence. As in the
MJMC, these probabilities will depend on
a sequence $z_1,z_2,\ldots$ of nonnegative real parameters. 
Suppose that we have constructed
the $i-1$ first positions $(a(1),\ldots,a(i-1))$ of a random bumping
sequence, so that $a(i)$ has to be chosen in the set $\{\ell|\,
a(i-1)<\ell\leq n+1, w_{\ell}>w_{a(i-1)}\}$: $z_j$ is then
proportional to the probability that we pick $a(i)$ as the $j$'th
largest element in that set. 
 Upon normalizing, we find that the
actual probability of picking a specific $a(i)$ can be written as
\begin{equation}
  \label{eq:bumpingproba}
  Q_{w,a}(i) = \frac{z_{J_w(a(i),w_{a(i-1)})}}{y_{J_w(a(i-1),w_{a(i-1)})}},  
\end{equation}
where we introduce the useful notations
\begin{equation}
  \label{eq:yJwdef}
  \begin{split}
    y_i&=z_1+\cdots+z_i \\
    J_w(m,t)&=1+ \# \{\ell|\, m\leq\ell\leq n, w_{\ell}>t\}
  \end{split}
\end{equation}
for $m\in \{ 1,\ldots,n \}$ , $t \in \{ 1,\ldots,T \}$ and
$i \in \{ 2,\ldots,k \}$.  All in all, the global
probability assigned to the bumping sequence $a$ is $\prod_{i=2}^k
Q_{w,a}(i)$.  Noting that, for all states $w,w' \in
St_{n_1,\ldots,n_T}$, there is at most one $a \in \mathcal{B}_w$ such
that $w'=w^a$, we define the transition probability from $w$ to $w'$
as
\begin{equation}
  \label{eq:jugp}
  P_{w,w'} =
  \begin{cases}
   \ds \prod_{i=2}^k Q_{w,a}(i) &
    \text{if $w'=w^a$ for some $a\in \mathcal{B}_w$,}  \\
    0 & \text{otherwise.}
  \end{cases}
\end{equation}
For instance, the transition of Figure~\ref{fig:bump} has probability
$z_4/y_5 \times z_2/y_2 \times z_1/y_1$.

\begin{rem}
The choice of transition probabilities is very important for the model to be solvable. 
For example, if we choose $z_j$ as the probability of the $j$'th smallest instead of largest, then
 that chain does not seem to have a simple stationary distribution.
 \end{rem} 

\begin{rem}
  The MJMC \cite[Section 2]{ABCN} is recovered by taking $T=2$,
  and identifying $1$'s with balls ($\bullet$) and $2$'s with vacant
  positions ($\circ$).
\end{rem}

\begin{figure}[ht]
  \centering
  \includegraphics{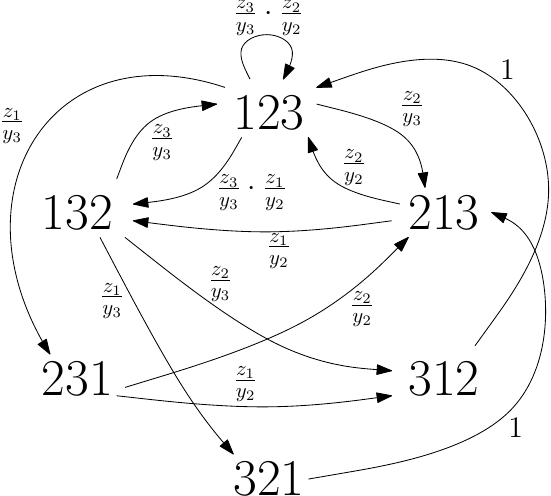}
  \caption{Transition graph of the MSJMC with $T=3$ and
    $n_1=n_2=n_3=1$.}
  \label{fig:bumpchain}
\end{figure}
\begin{exmp}
\label{mtrx}
Figure~\ref{fig:bumpchain} illustrates the MSJMC on $St_{1,1,1}$, and
the corresponding transition matrix with respect to the ordered basis
$(123,132,213,231, \allowbreak 312,321)$ reads
  \begin{equation}
    \begin{pmatrix}
      \frac{z_3}{y_3}\cdot \frac{z_2}{y_2} & \frac{z_3}{y_3}\cdot \frac{z_1}{y_2} & \frac{z_2}{y_3} & \frac{z_1}{y_3} & 0 & 0\\
      \frac{z_3}{y_3} & 0 & 0 & 0 & \frac{z_2}{y_3} & \frac{z_1}{y_3}\\
      \frac{z_2}{y_2} & \frac{z_1}{y_2} & 0 & 0 & 0 & 0\\
      0 & 0 & \frac{z_2}{y_2} & 0 & \frac{z_1}{y_2} & 0\\
      1 & 0 & 0 & 0 & 0 & 0\\
      0 & 0 & 1 & 0 & 0 & 0
    \end{pmatrix}.
  \end{equation}
  Observe that $(y_1y_2y_3,y_1^2y_3,y_1y_2^2,y_1^2y_2,y_1^2y_2,y_1^3)$ is
  a left eigenvector with eigenvalue $1$, and thus is proportional to the
  stationary distribution.
\end{exmp}

\begin{rem}
We now give an argument to show that generically (i.e.\ when all $z_i$
are nonzero), the MSJMC is irreducible and aperiodic, and thus admits a
unique stationary distribution $\pi$. 

We can reach any state $w$ by the following procedure.  Note that it
suffices to reach a cyclic shift of $w$, since we can always throw
balls to the rightmost position.  Start by positioning the heaviest
balls, that is the balls labelled $1$ as is done in $w$. This can be
done by the irreducibility of the unlabelled chain, studied in
\cite{ABCN}. Assume now by induction that balls labelled $1,\ldots,i$
have been positioned as in $w$. The juggler will now throw all balls
labelled $1,\ldots,i$ to the rightmost position and balls labelled
$i+1$ to the positions given by $w$. Those positions will be occupied
by lighter balls since all heavier balls already are sorted according
to $w$. Those lighter balls can bounce to anywhere, the rightmost
position for instance. After
$T$ transitions all balls labelled $i+1$ have been sorted according to $w$. By induction, irreducibility is proven.\\
Furthermore, the state $1^{n_1}2^{n_2}\cdots T^{n_T}$ can be sent to
itself through the bumping sequence $(1,n_1+1,n_1+n_2+1,\ldots,n+1)$,
which proves the aperiodicity of the model.
\end{rem}

Our main result for this section
is an explicit expression for $\pi$.

\begin{thm}
  \label{thm:multijugstat}
  The stationary probability of $w\in St_{n_1,\ldots,n_T}$ is given by
  \begin{equation}
    \label{eq:multijugstat}
    \pi(w) = \frac{1}{Z} \prod_{i=1}^n y_{E_w(i)},
  \end{equation}
	where
	  \begin{equation}
    \label{eq:Edef}
    E_w(i)=1+\#\{j|i\leq j\leq n, w_j>w_i\}=J_w(i,w_i),
  \end{equation}
  and the normalisation factor $Z$ reads
  \begin{equation}
    \label{eq:Zform}
    Z=\prod_{i=1}^T h_{n_i}(y_1,\ldots,y_{n-n_1-\cdots-n_i+1}),
  \end{equation}
  with $h_{\ell}$ being the complete homogeneous symmetric polynomial of
  degree $\ell$.
\end{thm}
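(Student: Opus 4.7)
The plan follows the lumping strategy of Remark~\ref{rem:lumping}, generalising the construction of~\cite{ABCN} for the case $T = 2$. Introduce the enriched state space
\[
  \tilde{S} = \{(w, \mathbf{b}) : w \in St_{n_1,\ldots,n_T},\ 1 \le b_j \le E_w(j) \text{ for } 1 \le j \le n\},
\]
equipped with the equivalence relation $(w, \mathbf{b}) \sim (w', \mathbf{b}')$ iff $w = w'$, so that $\tilde{S}/\!\sim\;=\;St_{n_1,\ldots,n_T}$. Since $\sum_{b=1}^{E_w(j)} z_b = y_{E_w(j)}$, the candidate enriched distribution $\tilde{\pi}(w, \mathbf{b}) = Z^{-1} \prod_{j=1}^n z_{b_j}$ lumps exactly to the right-hand side of~\eqref{eq:multijugstat} via~\eqref{eq:pisum}, so it suffices to exhibit an enriched chain having $\tilde{\pi}$ as stationary distribution and projecting to the MSJMC.

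I would obtain the enriched transition as the composition of two ingredients. First, a resampling step redraws the tags conditionally on $w$ with probability $\prod_j z_{b_j}/y_{E_w(j)}$; this automatically preserves $\tilde{\pi}$ since the resample law coincides with the $w$-conditional marginal of $\tilde{\pi}$. Second, apply a deterministic map $\tau : \tilde{S} \to \tilde{S}$ defined as follows. Given $(w, \mathbf{b})$, read off a bumping sequence $a = (1, a(2), \ldots, a(k-1), n+1)$ by declaring $a(\ell+1)$ to be the $b_{a(\ell)}$-th largest element of $\{m : a(\ell) < m \le n+1,\ w_m > w_{a(\ell)}\}$, set $w' = w^a$, then copy every tag $b_j$ at a non-bumped position $j \notin \{1, a(2), \ldots, a(k-1)\}$ to position $j-1$ of $w'$, and place at each ``fresh'' position $a(\ell) - 1$ of $w'$ the tag $b_{a(\ell-1)}$ inherited from the bumped ball that just landed there.

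The key combinatorial input is the bijection $\sigma$ implicit in~\eqref{eq:wupdate}, which maps positions $\ge a(\ell)$ of $w$ bijectively onto positions $\ge a(\ell)$ of $w'$ while preserving weights; in particular it yields the identity $E_w(a(\ell-1)) = E_{w'}(a(\ell) - 1)$, which both guarantees that the inherited tag is in the allowed range and shows that the tag-update rule of $\tau$ preserves the multiset $\{b_1, \ldots, b_n\}$. Consequently $\tilde{\pi}$ is invariant under $\tau$, hence under the full enriched chain. For the lumping condition, given $(w, \mathbf{b})$ the sum $\sum_{\mathbf{b}'} \tilde{P}_{(w, \mathbf{b}), (w', \mathbf{b}')}$ factorises over non-bumped and bumped positions: the non-bumped factors collapse because $\sum_b z_b/y_{E_w(j)} = 1$, and what remains is $\prod_{\ell=1}^{k-1} z_{b_{a(\ell)}}/y_{E_w(a(\ell))}$, which is exactly the MSJMC transition probability~\eqref{eq:jugp}. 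The main technical obstacle is the bookkeeping involved in proving that $\tau$ is a bijection of $\tilde{S}$, in particular that the bumping sequence $a$ can be unambiguously recovered from $(w', \mathbf{b}')$ by induction on the length of the bumping sequence together with the identity above.

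Finally, the normalisation~\eqref{eq:Zform} is obtained independently by a combinatorial decomposition of $Z = \sum_w \prod_j y_{E_w(j)}$. For each label $i \in \{1, \ldots, T\}$, write $v^{(i)}$ for the subword of $w$ obtained by keeping only letters $\ge i$; it has length $N_{i-1} := n_i + n_{i+1} + \cdots + n_T$ with $N_i = N_{i-1} - n_i$. A direct calculation shows that if the $k$-th occurrence of letter $i$ in $v^{(i)}$ sits at position $p_k$, then the corresponding $E_w$-value equals $N_i + 1 - p_k + k$. As the $n_i$ positions $p_1 < \cdots < p_{n_i}$ range over all $n_i$-element subsets of $\{1, \ldots, N_{i-1}\}$, the associated values trace out bijectively the weakly decreasing sequences of length $n_i$ in $\{1, \ldots, N_i+1\}$, and the placements for different labels $i$ are independent. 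Hence $Z$ factorises as $\prod_{i=1}^T h_{n_i}(y_1, \ldots, y_{N_i+1})$, yielding~\eqref{eq:Zform}.
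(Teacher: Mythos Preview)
Your overall strategy---resample the auxiliary word from its $\tilde\pi$-conditional law, then apply a deterministic weight-preserving bijection $\tau$---is a legitimate and attractive variant of the lumping argument, and your computation of the normalisation $Z$ is correct (it is the paper's inductive argument carried out for all species at once). However, the identity you state as the ``key combinatorial input'', namely $E_w(a(\ell-1)) = E_{w'}(a(\ell)-1)$, is \emph{false}. In the running example $w=132132$, $a=(1,3,5,7)$, $w'=311223$ one finds $E_w(1)=5$ but $E_{w'}(2)=4$. The permutation property you cite (that $w'_{a(\ell)}\cdots w'_n$ is a rearrangement of $w_{a(\ell)}\cdots w_n$) only gives $J_w(a(\ell),t)=J_{w'}(a(\ell),t)$, hence $E_{w'}(a(\ell)-1)=J_w(a(\ell),w_{a(\ell-1)})$; this is \emph{not} $E_w(a(\ell-1))=J_w(a(\ell-1),w_{a(\ell-1)})$, the two differing by the number of letters heavier than $w_{a(\ell-1)}$ in positions $a(\ell-1),\ldots,a(\ell)-1$.

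The repair is immediate once you observe that, by your own construction of $\tau$, the tag $b_{a(\ell-1)}$ is \emph{determined} by $a(\ell)$: since $a(\ell)$ is the $b_{a(\ell-1)}$-th largest element of $\{m>a(\ell-1):w_m>w_{a(\ell-1)}\}\cup\{n+1\}$, one has $b_{a(\ell-1)}=J_w(a(\ell),w_{a(\ell-1)})=E_{w'}(a(\ell)-1)$ exactly (this is the paper's equation~\eqref{eq:observ}). Thus the inherited tag sits precisely at the top of its allowed range, which is all you need for $\tau$ to land in $\tilde S$; and the multiset preservation needs no identity at all, since your tag-update rule is visibly a permutation of the indices $\{1,\ldots,n\}$. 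With these corrections your argument goes through. For comparison, the paper uses the same enriched state space but different enriched dynamics: there is no resampling, the transition probability out of $(w,v)$ equals the MSJMC probability $P_{w,w^a}$, and $v'$ is obtained by setting $v'_{a(\ell)-1}=E_{w'}(a(\ell)-1)$ and shifting the remaining tags. Stationarity is then checked by summing over all predecessors of a given $(w',v')$; the recovery of the bumping sequence from $(w',v')$ by reverse induction is exactly the argument you allude to as the ``main technical obstacle''.
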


Returning again to the example $w=132132$, we have $\pi(w) =
y_1^3y_2y_3y_5$. According to the general lumping strategy outlined in
Remark~\ref{rem:lumping}, Theorem~\ref{thm:multijugstat} is proved in
Section~\ref{sec:multijugenrich} by
introducing a suitable enriched Markov chain.


\subsection{The enriched Markov chain}
\label{sec:multijugenrich}

The first idea to define the enriched Markov chain comes from
expanding the product on the right-hand side of
\eqref{eq:multijugstat} using the definition \eqref{eq:yJwdef} of the
$y_j$'s, resulting in a sum of monomials in the $z_j$'s which is
naturally indexed by the set of sequences $v=v_1 \cdots v_n$ of
positive integers such that $v_i \leq E_w(i)$ (with $E$ as defined in 
\eqref{eq:Edef}) for all $i \in \{ 1,\ldots,n \}$. Let us call such 
$v$ an \emph{auxiliary word} for $w$. This suggests that we can define
an enriched state as a pair $(w,v)$ where $w \in St_{n_1,\ldots,n_T}$
and $v$ is an auxiliary word for $w$. We denote by 
$\mathcal{S}_{n_1,\ldots,n_T}$ the set of enriched states.

The second idea, needed to define the transitions, is to use the
auxiliary word to ``record'' some information about the past, in such a
way that all transitions leading to a given enriched state have the
same probability (this will be a key ingredient in the proof of
Theorem~\ref{thm:multijugenrichstat} below). More precisely, given an
enriched state $(w,v)$, we consider as before a bumping sequence $a
\in \mathcal{B}_w$, and we define the resulting enriched state
$(w,v)^a=(w',v')$ by updating of course the basic state as before,
i.e.\ we set $w'=w^a$ as in \eqref{eq:wupdate}, while we update the
auxiliary word as
\begin{equation}
  \label{eq:vupdate}
  v'_i=
  \begin{cases}
    E_{w'}(i)  & \text{if $i=a(\ell)-1$ for some $\ell$,} \\
    v_{i+1} & \text{otherwise.} \\
  \end{cases}
\end{equation}
For instance, in our running example $w=132132$ and $a=(1,3,5,7)$, for
$v=412211$ we have $v'=142211$.  We may think of the auxiliary word as
``labels'' carried by the balls, that are modified (maximized) for the bumped
balls and preserved otherwise. The transition probability from $(w,v)$
to $(w',v')$ is as before
\begin{equation}
  \widetilde{P}_{(w,v),(w',v')}=
  \begin{cases}
    \ds \prod_{i=2}^k Q_{w,a}(i) &
    \text{if $(w',v')=(w,v)^a$ for some $a \in \mathcal{B}_w$,}  \\
    0 & \text{otherwise.}
  \end{cases}
  \label{eq:ejugp}
\end{equation}
It is clear that the enriched chain projects to the MSJMC. Indeed, we
define an equivalence relation over $\mathcal{S}_{n_1,\ldots,n_T}$ by
simply ``forgetting'' the auxiliary word, so that the equivalence
classes may be identified with $St_{n_1,\ldots,n_T}$ (note that $1^n$
is a valid auxiliary word for any element of $St_{n_1,\ldots,n_T}$).
The lumping condition \eqref{eq:lumping} is trivially satisfied, since
we have $\widetilde{P}_{(w,v),(w,v)^a} = P_{w,w^a}$ for all $(w,v)$ in
$\mathcal{S}_{n_1,\ldots,n_T}$ and $a$ in $\mathcal{B}_w$, and
$\widetilde{P}_{(w,v),(w^a,v')}=0$ whenever $(w^a,v')\neq(w,v)^a$.

\begin{thm}
  \label{thm:multijugenrichstat}
  The stationary distribution of $(w,v)$ in $\mathcal{S}_{n_1,\ldots,n_T}$
	for the enriched chain is
  \begin{equation}
    \label{eq:multijugenrichstat}
    \tilde{\pi}(w,v)= \frac{1}{Z} \prod_{i=1}^n z_{v_i}
  \end{equation}
  where $Z$ is the normalisation factor.
\end{thm}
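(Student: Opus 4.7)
The plan is to verify $\tilde\pi\tilde P=\tilde\pi$ directly by computing, for each target enriched state $(w',v')$, the incoming flow $\sum_{(w,v)}\tilde\pi(w,v)\tilde P_{(w,v),(w',v')}$. The central algebraic input is the identity
\[
J_w\bigl(a(i),w_{a(i-1)}\bigr)=E_{w'}\bigl(a(i)-1\bigr), \qquad i=2,\ldots,k,
\]
valid for any bumping sequence $a$ with $w^a=w'$. This follows by splitting the counts defining either side on the interval $[a(i),n]$: the later bumping targets contribute the common amount $k-i$ to both sides, while the remaining indices are put in bijection via $\ell\mapsto\ell+1$ using the shift rule $w'_\ell=w_{\ell+1}$ at non-bumped positions. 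Since $a(i)-1$ is itself a bumped position, \eqref{eq:vupdate} forces $v'_{a(i)-1}=E_{w'}(a(i)-1)$, and the transition probability rewrites as
\[
\tilde P_{(w,v),(w',v')}=\prod_{i=2}^{k}\frac{z_{v'_{a(i)-1}}}{y_{E_w(a(i-1))}},
\]
a formula depending on $v$ only through the bumping sequence $a$.

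Multiplying by $\tilde\pi(w,v)=\frac{1}{Z}\prod_{i=1}^n z_{v_i}$, I split the product $\prod z_{v_i}$ according to whether $i$ is one of the \emph{free} positions $a(1),\ldots,a(k-1)$ or a \emph{determined} position (where \eqref{eq:vupdate} gives $v_i=v'_{i-1}$). The determined factors combine with the numerators $\prod z_{v'_{a(i)-1}}=\prod_{i\in B}z_{v'_i}$ (with $B$ denoting the set of bumped positions of $w'$) to reassemble the full product $\prod_{i=1}^n z_{v'_i}$, so
\[
\tilde\pi(w,v)\,\tilde P_{(w,v),(w',v')}=\frac{1}{Z}\prod_{i=1}^n z_{v'_i}\cdot\prod_{\ell=1}^{k-1}\frac{z_{v_{a(\ell)}}}{y_{E_w(a(\ell))}}.
\]
Summing the free entries $v_{a(\ell)}\in\{1,\ldots,E_w(a(\ell))\}$ makes each ratio telescope to $1$ via $\sum_{j=1}^{E_w(a(\ell))}z_j=y_{E_w(a(\ell))}$, so every valid preimage pair $(w,a)$ contributes exactly $\tilde\pi(w',v')$ to the incoming flow.

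The main obstacle is to prove that each $(w',v')\in\mathcal{S}_{n_1,\ldots,n_T}$ admits a \emph{unique} such preimage pair $(w,a)$, where ``valid'' here means that the inherited labels $v_j=v'_{j-1}$ satisfy $v_j\leq E_w(j)$. I plan to establish this through a greedy right-to-left scan on $(w',v')$: initialise $B:=\{n\}$ and, for $i=n-1,n-2,\ldots,1$, adjoin $i$ to $B$ if and only if $v'_i=E_{w'}(i)$ and $w'_i<\min\{w'_j:j\in B,\,j>i\}$. The resulting $B$ induces a bumping sequence $a$ and, via \eqref{eq:wupdate}, a unique state $w$; one then checks that the inherited labels automatically satisfy $v_j\leq E_w(j)$ for the greedy choice, whereas any rival $B'\neq B$ either fails the strict-increase condition on $w'|_{B'}$ or forces some inherited label to exceed its bound. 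Once this uniqueness is settled, stationarity of $\tilde\pi$ follows immediately from the flow calculation above.
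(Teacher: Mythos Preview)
Your proposal is correct and follows essentially the same route as the paper's proof: the same key identity $J_w(a(i),w_{a(i-1)})=E_{w'}(a(i)-1)$, the same greedy right-to-left reconstruction of the bumping set from $(w',v')$ via the two conditions $v'_i=E_{w'}(i)$ and $w'_i<w'_{i'}$ with $i'$ the next larger element of $B$, and the same telescoping sum over the free labels $v_{a(\ell)}$. The only cosmetic difference is that you index by the bumped positions $B\subset\{1,\ldots,n\}$ in $w'$ while the paper works with $A=\{a(1),\ldots,a(k)\}\subset\{1,\ldots,n+1\}$, but since $B=\{j-1:j\in A\setminus\{1\}\}$ and $w'|_B$ is increasing, your minimum condition is equivalent to the paper's ``smallest element of $A$ above'' condition.
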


\begin{proof}
  We have to check that, for all $(w',v') \in
  \mathcal{S}_{n_1,\ldots,n_T}$, we have
  \begin{equation}
    \label{eq:statcond}
    \sum_{(w,v) \in \mathcal{S}_{n_1,\ldots,n_T}} \widetilde{P}_{(w,v),(w',v')}
      \tilde{\pi}(w,v) = \tilde{\pi}(w',v'),
  \end{equation}
  which is done by characterizing all possible predecessors of
  $(w',v')$. Let $(w,v)$ be such that $(w',v')=(w,v)^a$ for some
  bumping sequence $a \in \mathcal{B}_w$. We will
  show in particular that $a$ and $w$ are uniquely determined from the
  data of $(w',v')$. Hence, as claimed above, all transitions to
  $(w',v')$ have the same probability.

  We start by explaining how to recover the bumping sequence
  $a=(a(1),\allowbreak \ldots,a(k))$ or, more precisely, its set of values
  $A=\{a(1),\ldots,a(k)\}$. Recall that $1$ and $n+1$ belong to $A$ by
  definition. We claim that $j \in \{ 2,\ldots,n \}$ belongs to
  $A$ if and only if the following two conditions hold:
  \begin{itemize}
  \item[(i)] $v'_{j-1} = E_{w'}(j-1)$,
  \item[(ii)] $w'_{j-1} < w'_{j'-1}$ where $j'$ is the smallest element of
    $A \cap \{ j+1,\ldots,n+1 \}$.
  \end{itemize}
  Indeed, these two conditions are clearly necessary: (i) by
  \eqref{eq:vupdate}, and (ii) by \eqref{eq:wupdate} and the
  requirement that $w_j<w_{j'}$ when $j<j'$ are both in the bumping
  sequence. Conversely, assume that $j \notin A$, so that
  $w_j=w'_{j-1}$ and $v_j=v'_{j-1}$. By the definition of the MSJMC
  transitions, the subword $w'_j \cdots w'_n$ is a permutation of
  $w_{j+1} \cdots w_n w'_{j'-1}$. Hence, recalling \eqref{eq:yJwdef},
  $E_{w'}(j-1) - E_w(j)$ is equal to $1$ if (ii) holds
  and to $0$ otherwise. If (i) holds, we have $E_{w'}(j-1) =
  v'_{j-1} = v_j \leq E_w(j)$, and hence (ii) cannot hold. This
  completes the proof of our claim, which fully determines $A$ (hence
  $a$) by reverse induction.

  Once we have recovered $a$, it is clear that $w$ is uniquely
  determined, while we have $v_j=v'_{j-1}$ for $j \notin A$. All
  predecessors of $(w',v')$ are then obtained by picking, for each $j
  \in A \setminus \{n+1\}$, $v_j$ an arbitrary integer between $1$ and
  $E_w(j)$.  This shows that
  \begin{equation}
    \label{eq:sumpred}
    \sum_{v: (w',v')=(w,v)^a} \tilde{\pi}(w,v) =
    \frac{1}{Z} \prod_{j \notin A} z_{v'_{j-1}}
    \prod_{j \in A \setminus \{n+1\}} y_{E_w(j)}.
  \end{equation}

  The last observation we need is that
	\begin{equation}
	\label{eq:observ}
  J_w(a(i),w_{a(i-1)})=J_{w'}(a(i),w'_{a(i)-1})=E_{w'}(a(i)-1)=v'_{a(i)-1}
	\end{equation}
	for all   $i \in \{ 2,\ldots,k \}$, since $w'_{a(i)} \cdots w'_n$ is a
  permutation of $w_{a(i)} \cdots w_n$ and since
  $w_{a(i-1)}=w'_{a(i)-1}$. By \eqref{eq:bumpingproba} and
  \eqref{eq:jugp} we find that, for any predecessor $(w,v)$ of
  $(w',v')$,
  \begin{equation}
    \label{eq:probtr}
    \widetilde{P}_{(w,v),(w',v')} = \frac{\ds \prod_{j \in A\setminus \{1\}}
    z_{v'_{j-1}}}{\ds \prod_{j \in A \setminus \{n+1\}} y_{E_w(j)}}.
  \end{equation}
  Combined with \eqref{eq:sumpred}, the desired stationarity condition
  \eqref{eq:statcond} follows.
\end{proof}

\begin{proof}[Proof of Theorem~\ref{thm:multijugstat}]
  The expression~\eqref{eq:multijugstat} is immediately obtained by
  applying the general lumping expression \eqref{eq:pisum} for the
  stationary state, Theorem~\ref{thm:multijugenrichstat} and the
  definition of enriched states. It remains to check the expression
  \eqref{eq:Zform}, which we do by induction on $T$. Let
  $\phi(w)=\prod_{i=1}^n y_{E_w(i)}$, so that $Z$ is the sum of
  $\phi(w)$ over all $w \in St_{n_1,\ldots,n_T}$. The expression
  \eqref{eq:Zform} holds for $T=0$, as $Z=\phi(\epsilon)=1$ where
  $\epsilon$ is the empty word. For $T\geq 1$, let $w$ be a word in
  $St_{n_1,\ldots,n_T}$, and let $\hat{w} \in St_{n_2,\ldots,n_T}$ be
  the word obtained by removing all occurrences of $1$ in $w$, and
  shifting all remaining letters down by $1$. Denote by
  $i_1>\cdots>i_{n_1}$ the positions of $1$'s in $w$, and let
  $j_\ell=n+2-i_\ell-\ell$, so that $1 \leq j_1 \leq \cdots \leq
  j_{n_1} \leq n-n_1+1$. The mapping $w \mapsto
  (\hat{w},(j_1,\ldots,j_{n_1}))$ is bijective, and it is not
  difficult to see from the definition \eqref{eq:Edef} of
  $E$ that
  \begin{equation}
    \phi(w) = \phi(\hat{w}) \prod_{\ell=1}^{n_1} y_{j_\ell}.
  \end{equation}
  Summing the product on the right-hand side over all sequences
  $(j_1,\ldots,j_{n_1})$ yields the complete homogeneous symmetric
  polynomial $h_{n_1}(y_1,\ldots,y_{n-n_1+1})$ and \eqref{eq:Zform}
  follows by induction.
\end{proof}


\section{Multispecies juggling with fluctuating types}
\label{sec:multiadd}

Our goal is here to introduce multispecies generalisations of the
add-drop and annihilation models developed in \cite{Warrington,ABCN}. 
Both models have the same state space and the same
transition graph, but different transition probabilities. The state space
$St_n^T$ is the set of words of length $n$ on the alphabet
$\mathcal{A}=\{1,\ldots,T\}$. The number of balls of each type is not
fixed anymore and thus there are $T^n$ possible states.  The
transitions are similar to the ones in the MSJMC, except that the type
of the ball the juggler throws is independent of the type of the ball
she just caught. This ball then initiates a bumping sequence as defined
before. More precisely, starting with a state $w=w_1\cdots w_n \in
St_n^T$, we let $w^- = w_2\cdots w_n$. Transitions involve
replacing the first letter of $w$ by an arbitrary $j \in \mathcal{A}$,
resulting in the intermediate state $jw^-$, then applying a bumping
sequence $a \in \mathcal{B}_{jw^-}$, resulting in the final state
$(jw^-)^a$, where $(\cdot)^a$ is defined as in
\eqref{eq:wupdate}. Defining transitions probabilities requires
specifying how we pick $j$ and $a$. The multispecies add-drop and
annihilation models differ in the way that we pick the new ball of type
$j$ and the position $a(2)$ where it is inserted, while the subsequent
elements $a(3),\ldots,a(k)$ of the bumping sequence are then chosen in
the same way as for the MSJMC. Figure~\ref{fig:fluct_chain} shows all allowed
transitions for $St_2^3$.

\begin{rem}
Both chains are irreducible, since a state $w_1\cdots w_n$ can be
reached from any state in $n$ steps by just putting a $w_i$ in the rightmost position
at the $i$'th step. The models are also aperiodic since the state $1^n$
can be sent to itself by putting a $1$ at the rightmost position. This remark also applies to the overwriting model, described later in Section~\ref{sec:ov}.
\end{rem}

\begin{figure}[ht]
	\centering
		\includegraphics{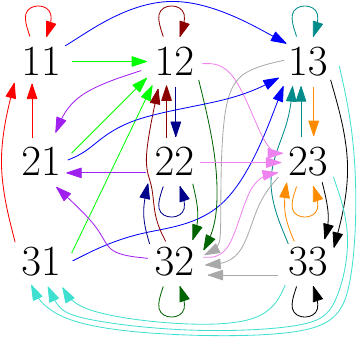}
                \caption{ The transition graphs of the add-drop 
                  and annihilation models on
                  $St_2^3$. Arrows with the same colour correspond to
                  transitions with the same probability (as, in both
                  the add-drop and the annihilation model, the
                  transition probability does not depend the first
                  letter of the initial state).}
	\label{fig:fluct_chain}
\end{figure}


\subsection{Add-drop model}
\label{sec:ad}

In the add-drop model, choosing a ball of type $j$ and sending it to
the $\ell$'th available position from the right is done with
probability proportional to $c_j z_\ell$ where, in addition to the
previous parameters $z_1,z_2,\ldots$, we introduce new nonnegative
real parameters $c_1,\ldots,c_T$ that can be interpreted as
``activities'' for each type of ball. Because lighter balls can be
inserted in fewer possible positions, the actual probability of
choosing $j$ and $\ell$ has to be normalized, and reads $c_j
z_\ell/(\sum_{t=1}^T c_t y_{J_{w}(2,t)})$ where $w$ is the initial
state -- recall the definition~\eqref{eq:yJwdef} of the notations
$y_i$ and $J_w$ and note that $J_w(m,t)=J_{jw^-}(m,t)$ for all
$m>1$. As the position where the new ball is inserted is $a(2)>1$,
saying that it is the $\ell$'th available position from the right
means that $\ell=J_{w}(a(2),j)$. As described above, subsequent
elements $a(3),\ldots,a(k)$ of the bumping sequence $a$ are chosen in
the same way as for the MSJMC, so that the global probability of
picking a new ball of type $j$ and a bumping sequence $a \in
\mathcal{B}_{jw^-}$ is
\begin{equation}
  p_w(j,a)=\frac{c_j z_{J_{w}(a(2),j)}}{\ds \sum_{t=1}^T c_t y_{J_{w}(2,t)}}
  \; \prod_{i=3}^k Q_{w,a}(i),
  \label{eq:adprob}
\end{equation}
 where we recall the notation \eqref{eq:bumpingproba}.
The \emph{multispecies add-drop juggling Markov chain} is then the
Markov chain on the state space $St_n^T$ for which the transition
probability from $w$ to $w'$ reads
  \begin{equation}
	    \label{eq:adjugp}
    P_{w,w'}=
    \begin{cases}
      p_w(j,a) &
      \text{if $w'=(jw^-)^a$ for some $j \in \mathcal{A}$ and
        $a\in \mathcal{B}_{jw^-}$,}  \\
      0 & \text{otherwise.}
    \end{cases}
  \end{equation}
Note that we recover the add-drop juggling model \cite{ABCN} when we set $T=2$.

\begin{exmp}
The transition matrix of the multispecies add-drop Markov chain on the state space
$St_2^3$ in the ordered basis $(11,21,31,12,22,32,13,23,33)$ reads
\begin{equation}
    \begin{pmatrix}
     \frac{c_1z_1}{\lambda_1} & 0 & 0 & \frac{c_2z_1}{\lambda_1} & 0 & 0 & \frac{c_3z_1}{\lambda_1} & 0 & 0\\
     \frac{c_1z_1}{\lambda_1} & 0 & 0 & \frac{c_2z_1}{\lambda_1} & 0 & 0 & \frac{c_3z_1}{\lambda_1} & 0 & 0\\
     \frac{c_1z_1}{\lambda_1} & 0 & 0 & \frac{c_2z_1}{\lambda_1} & 0 & 0 & \frac{c_3z_1}{\lambda_1} & 0 & 0\\
		 0 & \frac{c_1z_1}{\lambda_2} & 0 & \frac{c_1z_2}{\lambda_2} & \frac{c_2z_1}{\lambda_2} & 0 & 0 & \frac{c_3z_1}{\lambda_2} & 0\\
		 0 & \frac{c_1z_1}{\lambda_2} & 0 & \frac{c_1z_2}{\lambda_2} & \frac{c_2z_1}{\lambda_2} & 0 & 0 & \frac{c_3z_1}{\lambda_2} & 0\\
		 0 & \frac{c_1z_1}{\lambda_2} & 0 & \frac{c_1z_2}{\lambda_2} & \frac{c_2z_1}{\lambda_2} & 0 & 0 & \frac{c_3z_1}{\lambda_2} & 0\\
		 0 & 0 & \frac{c_1z_1}{\lambda_3} & 0 & 0 & \frac{c_2z_1}{\lambda_3} & \frac{c_1z_2}{\lambda_3} & \frac{c_2z_2}{\lambda_3} & \frac{c_3z_1}{\lambda_3}\\
		 0 & 0 & \frac{c_1z_1}{\lambda_3} & 0 & 0 & \frac{c_2z_1}{\lambda_3} & \frac{c_1z_2}{\lambda_3} & \frac{c_2z_2}{\lambda_3} & \frac{c_3z_1}{\lambda_3}\\
		 0 & 0 & \frac{c_1z_1}{\lambda_3} & 0 & 0 & \frac{c_2z_1}{\lambda_3} & \frac{c_1z_2}{\lambda_3} & \frac{c_2z_2}{\lambda_3} & \frac{c_3z_1}{\lambda_3}
    \end{pmatrix}
\end{equation}
with the notation $\lambda_1 = (c_1+c_2+c_3)y_1$ , $\lambda_2 = c_1y_2+(c_2+c_3)y_1$ and $\lambda_3 = (c_1+c_2)y_2+c_3y_1$.
One can check that $(c_1^2y_1^2,c_1c_2y_1^2,c_1c_3y_1^2,c_1c_2y_1y_2,c_2^2y_1^2,c_2c_3y_1^2,\allowbreak
c_1c_3y_1y_2,c_2c_3y_1y_2,c_3^2y_1^2)$ is a left eigenvector with eigenvalue $1$.
\end{exmp}

\begin{thm}
\label{thm:adstat}
The stationary probability of $w=w_1\cdots w_n \in St_n^T$ for the add-drop
model is given by
\begin{equation}
\label{eq:adstat}
\pi(w)=\frac{1}{Z} \prod_{i=1}^n c_{w_i}y_{E_w(i)}
\end{equation}
where the normalisation factor $Z$ reads
  \begin{equation}
    \label{eq:Zadform}
    Z=\sum_{n_1+\cdots+n_T=n}\left( c_1^{n_1}\cdots c_T^{n_T}\prod_{i=1}^{T} h_{n_i}(y_1,\ldots,y_{n-n_1-\cdots-n_i+1})\right)
  \end{equation}
  with $h_{\ell}$ the complete homogeneous symmetric polynomial of
  degree $\ell$.
\end{thm}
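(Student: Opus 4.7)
The plan is to mirror the strategy used for Theorem~\ref{thm:multijugstat}. I would introduce an enriched Markov chain on pairs $(w,v)$ where $w \in St_n^T$ and $v$ is an auxiliary word with $v_i \leq E_w(i)$ for all $i$. A transition chooses $(j,a)$ with the same probability $p_w(j,a)$ as in \eqref{eq:adprob}, sets $w' = (jw^-)^a$, and updates the auxiliary word by the direct analogue of \eqref{eq:vupdate}: $v'_i = E_{w'}(i)$ if $i = a(\ell)-1$ for some $\ell \geq 2$, and $v'_i = v_{i+1}$ otherwise. Forgetting $v$ gives the lumping onto the add-drop chain, and I would claim the enriched stationary distribution is
\[
  \tilde\pi(w,v) = \frac{1}{Z}\prod_{i=1}^n c_{w_i} z_{v_i}.
\]

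To prove this I would reuse the predecessor analysis of Theorem~\ref{thm:multijugenrichstat}. The set $A = \{a(1),\ldots,a(k)\}$ is recovered from $(w',v')$ by the same conditions (i)--(ii) of that proof; then $j = w'_{a(2)-1}$, and $(w_i,v_i)$ for $i \geq 2$ is determined, with $w_i = w'_{i-1}$, $v_i = v'_{i-1}$ when $i \notin A$, and $w_{a(\ell)} = w'_{a(\ell+1)-1}$ for $\ell = 2,\ldots,k-1$. The \emph{free} predecessor data is then $w_1 \in \mathcal{A}$, $v_1 \in \{1,\ldots,E_w(1)\}$, and $v_i \in \{1,\ldots,E_w(i)\}$ for each $i \in A \setminus \{1,n+1\}$. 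Using \eqref{eq:observ} (which still holds since $w'_{a(2)}\cdots w'_n$ is a permutation of $w_{a(2)}\cdots w_n$ under the add-drop dynamics), the probability \eqref{eq:adprob} rewrites as
\[
  p_w(j,a) = \frac{c_j\, z_{v'_{a(2)-1}}}{\sum_{t\in\mathcal{A}} c_t\, y_{J_w(2,t)}} \prod_{i=3}^k \frac{z_{v'_{a(i)-1}}}{y_{E_w(a(i-1))}}.
\]

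The stationarity check then reduces to a cancellation. Summing $\tilde{P}_{(w,v),(w',v')}\,\tilde\pi(w,v)$ over the free $v_i$ with $i \in A \setminus \{1,n+1\}$ produces $\prod_{i \in A \setminus \{1,n+1\}} y_{E_w(i)}$, cancelling the second denominator of $p_w(j,a)$; and summing over $(w_1,v_1)$ yields
\[
  \sum_{w_1\in\mathcal{A}} c_{w_1} \sum_{v_1=1}^{E_w(1)} z_{v_1} = \sum_{t\in\mathcal{A}} c_t\, y_{J_w(2,t)},
\]
(using $E_w(1) = J_w(2,w_1)$ and that $J_w(2,t)$ does not depend on $w_1$), cancelling the first denominator. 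A routine bookkeeping step reassembles the surviving factors into $\prod_{i=1}^n c_{w'_i} z_{v'_i}$, giving $Z\,\tilde\pi(w',v')$ as required. I expect this cancellation of the type-weighted denominator $\sum_t c_t y_{J_w(2,t)}$ against the marginalisation over the free $(w_1,v_1)$ pair to be the main subtlety; everything else runs parallel to the MSJMC case.

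Formula \eqref{eq:adstat} then follows by lumping, since $\sum_v \prod_i z_{v_i} = \prod_i y_{E_w(i)}$ over auxiliary words $v$ for $w$. To deduce \eqref{eq:Zadform}, I would split the sum $Z = \sum_{w \in St_n^T} \prod_i c_{w_i} y_{E_w(i)}$ according to the type multiplicity vector $(n_1,\ldots,n_T)$ of $w$: the factor $c_1^{n_1}\cdots c_T^{n_T}$ pulls out, and the residual sum over $St_{n_1,\ldots,n_T}$ is exactly the normalization computed in Theorem~\ref{thm:multijugstat}, producing the claimed product of complete homogeneous symmetric polynomials.
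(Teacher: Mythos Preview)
Your proposal is correct and follows essentially the same route as the paper: the same enriched chain on $\mathcal{S}_n^T$ with monomial stationary weights $\tilde\pi(w,v)\propto\prod_i c_{w_i}z_{v_i}$, the same predecessor reconstruction inherited from Theorem~\ref{thm:multijugenrichstat}, and the same cancellation of $\sum_t c_t\,y_{J_w(2,t)}$ against the marginalisation over the free pair $(w_1,v_1)$. Your derivation of \eqref{eq:Zadform} by stratifying over type-multiplicity vectors and invoking \eqref{eq:Zform} is also the intended argument, and in fact you spell it out more explicitly than the paper's own proof does.
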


\begin{proof}
We will again follow the strategy described in Remark~\ref{rem:lumping}.
We consider the enriched Markov chain whose state space is the set $\mathcal{S}_n^T$ of pairs of words $(w,v)$ with
$w\in St_n^T$ and $v=v_1\cdots v_n$ is an auxiliary word as defined in Section~\ref{sec:multijugenrich}.
Given $(w,v)\in \mathcal{S}_n^T$, $j\in \mathcal{A}$ and $a\in
\mathcal{B}_{jw^-}$, we define the resulting enriched state $(w,v)_j^a = (w',v')$
by setting $w'=(jw^-)^a$, and 
\begin{equation}
\label{eq:enradtrans}
v'_i = 
\begin{cases}
E_{w'}(i) & \text{if $i=a(l)-1$ for some $l$,}\\
v_{i+1} & \text{otherwise,}
\end{cases}
\end{equation}
and the transition probabilities are of course given by
\begin{equation}
\label{eq:enradprob}
\tilde{P}_{(w,v),(w',v')} = 
\begin{cases}
p_w(j,a) & \text{if $(w',v') = (w,v)_j^a$ for some $j\in \mathcal{A}$ and $a\in \mathcal{B}_{jw^-}$,}\\
0 & \text{otherwise,}
\end{cases}
\end{equation}
with $p$ as defined in \eqref{eq:adprob}.

We will now show that the stationary probability of $(w,v)\in \mathcal{S}_n^T$ for
the enriched add-drop model is given by
\begin{equation}
\label{eq:enradstat}
\tilde{\pi}(w,v)=\frac{1}{Z} \prod_{i=1}^{n} c_{w_i}z_{v_i},
\end{equation}
which will give us equation \eqref{eq:adstat} by lumping. 
  We thus have to check that, for all $(w',v') \in
  \mathcal{S}_n^T$, we have
  \begin{equation}
    \label{eq:adstatcond}
    \sum_{(w,v) \in \mathcal{S}_n^T} \widetilde{P}_{(w,v),(w',v')}
      \tilde{\pi}(w,v) = \tilde{\pi}(w',v').
  \end{equation}
Let $(w',v')$ be a state in $\mathcal{S}_n^T$. 
For a given $(w',v')$ we can deduce most of a possible predecessor $(w,v)$. As in the proof of Theorem \ref{thm:multijugenrichstat} we can first deduce the
bumping sequence $a$, then the type $j$ of the added ball. This means that $w_2,\ldots,w_n$ and 
$v_i,i\notin A$ are uniquely determined. Recall that for $a=(a(1), \dots, a(k))$ we defined $A=\{a(1),\dots, a(k)\}$.
Let $W=jw_2\cdots w_n$. We have
  \begin{equation}
    \label{eq:adsumpred}
    \sum_{(w,v) : (w',v')=(w,v)_j^a} \tilde{\pi}(w,v) =
    \frac{1}{Z} \sum_{i=1}^T c_{i} y_{J_W(2,i)}\prod_{\ell \notin A} z_{v'_{\ell-1}}
    \prod_{\ell \in A \setminus \{1,n+1\}} y_{E_W(\ell)}.
  \end{equation}
Furthermore, by observing that $J_w(a(i),w_{a(i-1)})=v'_{\ell-1}$ (as in equation \eqref{eq:observ}) we have, for all $(w,v)$ such that $(w,v)_j^a = (w',v')$,
\begin{equation}
\label{eq:adprobtr}
\tilde{P}_{(w,v),(w',v')} = 
\frac{c_j z_{J_W(a(2),j)}}{\ds \sum_{t=1}^T c_t y_{J_W(2,t)}} \;
\frac{\ds\prod_{\ell\in A\setminus \{1,a(2)\}}z_{v'_{\ell-1}}}{\ds\prod_{\ell\in A\setminus\{1,n+1\}}y_{E_W(\ell)}}.
\end{equation}
  Combined with \eqref{eq:adsumpred}, the desired stationarity condition
\eqref{eq:adstatcond} follows.
\end{proof}


\subsection{Annihilation model}
\label{sec:an}
In this section, we assume that hopping parameters are probabilities, i.e.\  
$z_1+\cdots+z_{n+1}=1$.
In this model, we consider that the juggler first tries to send a ball of
type $1$. She chooses $\ell\in\{ 1,\ldots,n+1\}$ with probability $z_{\ell}$, and tries to send the 
ball at the $\ell$'th available position (counted from the right as before). 
If $\ell$ is a valid position (that is, it is not larger than the number of available positions for the $1$), there is a bumping sequence whose subsequent elements are drawn in the same way as for the MSJMC.
Otherwise,
she tries instead to send a $2$ according to the same procedure, etc. In the end, if she did not manage to send
any ball of type in $\{ 1,\ldots,T-1\}$, she just sends a $T$ to the righmost position.
Note that failing to send a ball of type $t$ for an initial state $w$ is done with probability $1-y_{J_w(2,t)}$.
Globally, the probability of picking a new ball of type $j$ and a bumping sequence $a \in \mathcal{B}_{jw^-}$ reads
\begin{equation}
  q_w(j,a)= \begin{cases} 
  \ds z_{J_w(a(2),j)} \prod_{t=1}^{j-1}\left(1-y_{J_w(2,t)}\right) 
  \prod_{i=3}^k Q_{w,a}(i) & \text{if $j<T$,}\\
	\ds \prod_{t=1}^{T-1}\left(1-y_{J_w(2,t)}\right) & \text{if $j=T$.}
	\end{cases}
  \label{eq:annprob}
\end{equation}
In the latter case, we have $a=(1,n+1)$. The transition probabilities of the \emph{multispecies annihilation juggling Mar\-kov chain} are obtained by replacing $p_w(j,a)$ with $q_w(j,a)$ in \eqref{eq:adjugp}.
Note that we recover the annihilation juggling model \cite[Section 4.2]{ABCN} when we set $T=2$.

\begin{exmp}
The transition matrix of the multispecies annihilation Markov chain on the state space
$St_2^3$ in the basis $(11,21,31,12,22,32,13,23,\allowbreak 
33)$ reads
  \begin{equation}
    \begin{pmatrix}
     z_1 & 0 & 0 & z_1(z_2+z_3) & 0 & 0 & (z_2+z_3)^2 & 0 & 0\\
		 z_1 & 0 & 0 & z_1(z_2+z_3) & 0 & 0 & (z_2+z_3)^2 & 0 & 0\\
		 z_1 & 0 & 0 & z_1(z_2+z_3) & 0 & 0 & (z_2+z_3)^2 & 0 & 0\\
		 0 & z_1 & 0 & z_2 & z_1z_3 & 0 & 0 & (z_2+z_3)z_3 & 0\\
		 0 & z_1 & 0 & z_2 & z_1z_3 & 0 & 0 & (z_2+z_3)z_3 & 0\\
		 0 & z_1 & 0 & z_2 & z_1z_3 & 0 & 0 & (z_2+z_3)z_3 & 0\\
		 0 & 0 & z_1 & 0 & 0 & z_1z_3 & z_2 & z_2z_3 & z_3^2\\
		 0 & 0 & z_1 & 0 & 0 & z_1z_3 & z_2 & z_2z_3 & z_3^2\\
		 0 & 0 & z_1 & 0 & 0 & z_1z_3 & z_2 & z_2z_3 & z_3^2
    \end{pmatrix}.
  \end{equation}
The stationary distribution is given by 
$( z_1^2,z_1^2(1-y_1),z_1(1-y_1)^2,z_1(1-y_1),z_1^2z_3(1-y_1),\allowbreak z_1z_3(1-y_1)^2,y_2(1-y_1)^2,z_3y_2(1-y_1)^2,z_3^2(1-y_1)^2 )$, which is the unique left eigenvector with eigenvalue $1$.
\end{exmp}

\begin{thm} \label{thm:anstat}
The stationary probability of $w=w_1\cdots w_n \in St_n^T$ for the annihilation
model is given by
\begin{equation}
\label{eq:anstat}
\pi(w)= \left( \prod_{i=1,w_i<T}^n  y_{E_w(i)} 	\right)
\left( \prod_{\ell=2}^T \prod_{p=1}^{\# \{m|w_m \geq \ell\}}  \big(1-y_p\big) \right).
\end{equation}
Moreover, no normalisation factor is needed as
\begin{equation}
  \sum_{w \in St_n^T} \pi(w) =  (z_1+\cdots+z_{n+1})^{n(T-1)} = 1.
\end{equation}
\end{thm}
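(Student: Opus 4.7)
The plan is to apply the enriched-chain + lumping strategy of Remark~\ref{rem:lumping}, following the template of the proofs of Theorems~\ref{thm:multijugenrichstat} and~\ref{thm:adstat}. The starting observation is that the right-hand side of~\eqref{eq:anstat} expands, via $y_j = z_1+\cdots+z_j$ and $1-y_p = z_{p+1}+\cdots+z_{n+1}$, into a positive sum of monomials in the $z_k$'s. This sum is naturally indexed by pairs of auxiliary labels attached to $w$: a \emph{success} label $v_i \in \{1,\ldots,E_w(i)\}$ for every position $i$ with $w_i<T$ (as in the add-drop proof), together with a \emph{failure} label $u_{\ell,p} \in \{p+1,\ldots,n+1\}$ for every $\ell \in \{2,\ldots,T\}$ and every $p \in \{1,\ldots,\#\{m:w_m\geq\ell\}\}$. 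An enriched state is a triple $(w,v,u)$ of this shape, with candidate stationary distribution
\[
  \tilde\pi(w,v,u) = \prod_{i\,:\,w_i<T} z_{v_i}\;\prod_{\ell,p} z_{u_{\ell,p}},
\]
designed so that $\pi(w) = \sum_{(v,u)}\tilde\pi(w,v,u)$.

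Next I would define an enriched Markov chain that projects onto the annihilation chain via the forgetful map $(w,v,u)\mapsto w$. Its dynamics mimic the annihilation procedure: the juggler attempts types $t=1,2,\ldots$ in order; each failed attempt at type $t$ is accompanied by the recording of a new failure label (the sampled but unavailable position, an integer $>J_w(2,t)$), while the eventual successful throw of type $j$ via a bumping sequence $a \in \mathcal{B}_{jw^-}$ updates $w$ and the success labels $v$ exactly as in the add-drop enriched chain~\eqref{eq:enradtrans}. Preserved labels travel with the unchanged balls; ``reset'' labels --- success labels on newly bumped positions and the newly created failure labels --- are assigned their maximal allowed values. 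Transition probabilities are the annihilation ones $q_w(j,a)$ of~\eqref{eq:annprob}, which depend only on $w$, so that the projection is tautological.

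The stationarity identity $\tilde\pi\tilde P = \tilde\pi$ is then verified in the same style as in the proofs of Theorems~\ref{thm:multijugenrichstat} and~\ref{thm:adstat}. Given a target $(w',v',u')$, one reads off the bumping sequence $a$, the thrown type $j$, and the number $s$ of failed attempts from the positions at which the labels attain their maximal values; this determines $w$ and all ``old'' labels uniquely. The remaining freedom in the predecessor ranges precisely over the allowed values of the reset labels, and summing $\tilde\pi$ over these choices produces the factors $y_{E_w(\cdot)}$ and $1-y_{J_w(2,t)}$ that cancel the denominators of $\tilde P/\tilde\pi$, reducing the balance equation to the definition of $q_w(j,a)$ in~\eqref{eq:annprob}. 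Applying the lumping relation~\eqref{eq:pisum} then yields~\eqref{eq:anstat}.

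For the normalization identity $\sum_{w \in St_n^T} \pi(w) = (y_{n+1})^{n(T-1)}$, I would proceed by induction on $T$ in the spirit of the proof of~\eqref{eq:Zform}: pulling out the positions of a chosen letter in $w$ together with their associated labels allows a reduction to an alphabet of size $T-1$, each step contributing a factor $y_{n+1}^n$. I expect the main obstacle to be the bookkeeping of failure labels in the enriched dynamics: the number of failure labels depends on $w$ and therefore fluctuates from state to state, so the precise description of how the $u_{\ell,p}$ are carried and reindexed during a transition --- and matched against the failures created in that transition --- has to be set up with care to make the predecessor analysis of the stationarity paragraph go through.
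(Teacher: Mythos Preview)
Your strategy differs from the paper's in one key respect: you propose to enrich with both success labels $v_i$ and failure labels $u_{\ell,p}$, aiming for a monomial enriched stationary distribution. The paper instead reuses exactly the add-drop enriched state space $\mathcal{S}_n^T$ (only the $v$-labels) and takes
\[
\tilde\pi(w,v)=\Bigl(\prod_{i:\,w_i<T} z_{v_i}\Bigr)\prod_{\ell=2}^{T}\prod_{p=1}^{\#\{m:w_m\ge\ell\}}(1-y_p),
\]
leaving the $(1-y_p)$ block unexpanded. The stationarity check then proceeds as for add-drop, with one extra ingredient: since $w_1$ is caught and discarded without affecting the transition, the predecessors of a given $(w',v')$ range over \emph{all} choices of $w_1\in\{1,\ldots,T\}$ (in addition to the free reset $v$-labels). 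Summing the predecessor weight over $w_1$ is a telescoping identity --- the $K=1$ computation in the paper --- and this is exactly what makes the $(1-y_p)$ block match between $w$ and $w'$. Your sentence ``this determines $w$ and all `old' labels uniquely'' is therefore not correct: $w_1$ is free, and handling that freedom (together with the $w_1-1$ failure labels that the caught ball carried and that simply vanish) is the real content of the annihilation proof.

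Your double enrichment could in principle be made to work --- the paper itself remarks that a further enrichment should exist, and it was carried out for $T=2$ in \cite{ABCN} --- but it requires solving precisely the obstacle you flag at the end: the $u$-labels have to be attached to individual balls and reindexed as balls move, and since $w_1-1$ labels are destroyed while $j-1$ new ones are created, their total count changes by $j-w_1$ at every step. The paper's single enrichment sidesteps all of this bookkeeping at the cost of a non-monomial $\tilde\pi$, trading it for the short telescoping argument. On the normalization identity: the paper does not spell out a separate proof either; your induction on $T$ is a reasonable plan, but be aware that the monomials in $\pi(w)$ do not all have degree $n(T-1)$, so any argument must use $z_1+\cdots+z_{n+1}=1$ in an essential way rather than just as a final substitution.
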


The stationary probabilities of enriched states
are no longer monomials in the $z_i$'s, which suggest that a further
enrichment is possible as already observed for the case $T=2$ in Section 4.2 of
\cite{ABCN}.

\begin{proof}

The theorem is proved by enriching the chain as before. 
We again use the state space $\mathcal{S}_n^T$ for the enriched multispecies annihilation Markov chain, as for the enriched multispecies add-drop Markov
chain in Section~\ref{sec:ad}. The transitions are also defined in the same way. The transition probabilities are now given by
\begin{equation}
\label{eq:enranprob}
\tilde{P}_{(w,v),(w',v')} = 
\begin{cases}
q_w(j,a) & \text{if $(w',v') = (w,v)_j^a$ for some $j\in \mathcal{A}$ and $a\in \mathcal{B}_{jw^-}$,}\\
0 & \text{otherwise,}
\end{cases}
\end{equation}
with $q$ as defined in \eqref{eq:annprob}.
We will now show that the stationary probability of $(w,v)\in \mathcal{S}_n^T$, $w=w_1\cdots w_n$,
$v=v_1\cdots v_n$ is given by
\begin{equation}
\label{eq:enranstat}
\tilde{\pi}(w,v)=\prod_{i=1,w_i < T}^n z_{v_i}\prod_{\ell=2}^T\prod_{p=1}^{\# \{m|w_m \geq \ell\}} (1-y_p).
\end{equation}
Once we prove this, we will obtain a proof of \eqref{eq:anstat} by lumping.
To do so, we have to check that $\tilde{\pi}$ satisfies
  \begin{equation}
    \label{eq:anstatcond}
    \sum_{(w,v) \in \mathcal{S}_n^T} \widetilde{P}_{(w,v),(w',v')}
      \tilde{\pi}(w,v) = \tilde{\pi}(w',v').
  \end{equation}

Let $(w',v')$ be a state in $\mathcal{S}_n^T$. 
We do not have a lot of choice in choosing a predecessor $(w,v)$ of $(w',v')$;
the bumping sequence $a$, the type $j$ of the added ball, $w_2,\ldots,w_n$ and 
$v_i,i\notin A$, where as before $A$ is the set of values in $a$, are uniquely determined
(this works exactly as for the proof of Theorem \ref{thm:multijugenrichstat}).
Let $W=jw_2\cdots w_n$, and consider the quantities
\begin{equation}
  \begin{split}
    C&=\prod_{i=1,w'_i < T}^n z_{v'_i},\\
    C'&=\begin{cases}
      z_{J_W(a(2),j)}\frac{\prod_{i\in A\setminus\{1,a(2)\}} z_{v'_{i-1}}}{\prod_{i\in A\setminus \{1,n+1\}}y_{E_W(i)}} & \text{if $j < T$,}\\
      1 &\text{otherwise,}
    \end{cases}\\
    C''&=\prod_{\substack{i\in A\setminus\{1,n+1\}\\ w_i< T}} y_{E_W(i)} \prod_{\substack{i\notin A\\ w_i< T}} z_{v_i},\\
    D&=\prod_{\ell=2}^T\prod_{p=1}^{\# \{m|w'_m \geq \ell\}} (1-y_p),\\
    D'&=\prod_{i=1}^{j-1} (1-y_{J_W(2,i)}),\\
    D''&=\prod_{\ell=2}^T\prod_{p=1}^{\#  \{m\geq 2 | w_m\geq \ell\}}(1-y_p), \\
  \end{split}
\end{equation}
\begin{equation*}
    K= \prod_{\ell=2}^T (1-y_{\#\{m\geq 2|w_m\geq
    	\ell\}+1}) + \sum_{w_1=1}^{T-1}y_{E_{w}(1)}\prod_{\ell=2}^{w_1}(1-y_{\#\{m\geq 2|w_m\geq
      \ell\}+1}).
\end{equation*}
First, we note that $C'C''=C$. Recall that $j=T$ implies $A=\{1,n+1\}$. Secondly, we have $D'D''=D$, since $J_W(2,i)=\#\{m\geq 2| w_m> i\}+1$.
For all $(w,v)$ such that $\tilde{P}_{(w,v),(w',v')}\neq 0$, we have
\begin{equation}
    \label{eq:anprobtr}
\tilde{P}_{(w,v),(w',v')}=C'D',
\end{equation}
where the $C'$ follows from \eqref{eq:observ} and \eqref{eq:bumpingproba} and where we recall that the probability of failing to send a ball of type $i$ for an initial state $w$ is done with probability $1-y_{J_w(2,i)}$.
Note that the transfer probability does not depend on the choice of $(w,v)$. 
Now we have
\begin{equation}
\label{eq:ansumpred}
\sum_{(w,v) : (w',v')=(w,v)_j^a} \tilde{\pi}(w,v) = C''D''K,
\end{equation}
where the $i=1$ case has been removed from $C''$ and collected in $K$. Furthermore, $K$ can be rewritten as
\begin{equation}
K = \prod_{\ell = 1}^{T-1} (1 - y_{J_w(2,\ell)} )+ 
\sum_{w_1=1}^{T-1} y_{J_w(2,w_1)}  \prod_{\ell = 1}^{w_1-1} (1 - y_{J_w(2,\ell)}),
\end{equation}
which can be easily seen to telescope to $1$.
The desired condition \eqref{eq:anstatcond} follows.
\end{proof}

\subsection{Overwriting model}
\label{sec:ov} 

The aim is here to describe a multispecies generalisation of the annihilation model studied in \cite{ABCN} in which
the \textit{ultrafast convergence} property holds, namely we want the stationary distribution to be reached in a finite number of steps, independent of the starting distribution.
Let $P$ be the transition matrix of a Markov chain (which is assumed to be irreducible and aperiodic). Saying that this Markov chain has the ultrafast convergence property is equivalent to saying that there exists an integer $m$ such that $P^m$ is the matrix whose rows are copies of the left eigenvector of $P$ for the
eigenvalue $1$, or to saying that $P$ has only one nonzero eigenvalue (which is $1$).

\subsubsection{Model description}

The state space $St_n^T$ is, as described in Section~\ref{sec:multiadd}, is the set of words in $\{1,\ldots,T\}^n$, and the transition probabilities are dictated by the indeterminates $z_1,\dots,z_{n+1}$ satisfying $z_1+z_2+\cdots +z_{n+1}=1$ as in Section~\ref{sec:an}.

In the {\em overwriting model}, each integer gets a chance to overwrite an integer larger than it. More formally, the transitions are described by the following process. Initially, the first letter is erased, and everything is moved to the left by $1$ step.
Now the juggler first tries to send a ball of
type $1$; she chooses $i\in \{1,n+1\}$ with probability $z_i$, and aims for the $i$'th available position, meaning those positions which contain an integer greater than 1 (counting from right as before).
If $i$ is greater than the number of available positions for a $1$ to land, the juggler simply fails to send a $1$, otherwise the $1$ lands to some position, destroying the ball previously occupying it if it isn't the righmost position.
She then tries to send a $2$, which can only land in a position higher than the $1$ did (any available position if the $1$ didn't land), or fail to land. She then tries to send a $3$, and so on.
If, after trying to send a ball of each type in $\{1,\ldots,T-1\}$, no ball landed to the rightmost position,
she puts a $T$ in that position. Otherwise, the new state is reached as soon as a ball is sent to the rightmost position.

For $w\in St_n^T$, we define an \textit{overwriting sequence} $B = ((b_1,t_1),\ldots,(b_{k},t_{k}))$ for $w$ as follows: 
$1<b_1<b_2<\ldots<b_{k}=n+1$, $1\leq t_1<t_2<\ldots<t_{k}\leq T$, and for all $i=1,\ldots,k$, $t_i< w_{b_i}$ with, by
convention, $w_{n+1}=+\infty$. We denote by $\mathcal{B}_w$ the set of overwriting sequences for $w$.

For $w\in St_n^T$ and $B\in \mathcal{B}_w$, the state in $St_n^T$ obtained by applying the overwriting sequence $B$ to the word $w$, denoted $w^B$, is given by
\begin{equation}   \label{eq:owstate}
    w^B_i = 
    \begin{cases}
      t_j & \text{if $i=b_j-1$ for some $j$,}  \\
      w_{i+1} & \text{otherwise.}
    \end{cases}
\end{equation}
Note that the probability for the juggler to fail to send a ball of type $\ell$ during the $j$'th part of the overwriting
is $1-y_{J_w(b_{j-1}+1,\ell)}$,
with the convention that $b_0 = 1$ and where $J$ is as in \eqref{eq:yJwdef}.
This returns $1$ if a ball has already been sent to the rightmost position. 
The probability for the juggler to succeed in sending a ball of type $t_j$ 
to position $b_j-1$ during the $j$'th part of the overwriting is
$z_{J_w(b_j,t_j)}$.

\begin{figure}
	\centering
		\includegraphics[width=0.9\textwidth]{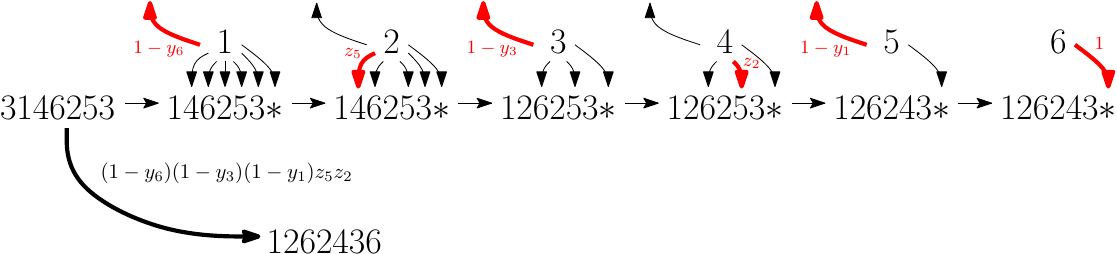}
                \caption{Computation of the transition probability
                  from $3146253$ to $1262436$ in the overwriting
                  multispecies Markov chain on $St_7^6$. We
                  successively attempt to insert each number
                  $1,\ldots,6$ at an available position or discard it
                  (the possible choices are represented by arrows, and
                  the chosen one is displayed as a thick red arrow
                  together with its probability).}
		\label{fig:owseq}
\end{figure}

\begin{exmp}
In Figure \ref{fig:owseq}, we describe the transition from the state $3146253$ to the state $1262436$ in
the state space $St_7^6$. The corresponding overwriting sequence is given by $((3,2),(6,4),(8,6))$ and the probability of this transition is $(1-y_6) z_5 (1-y_3) z_2 (1-y_1)$.
\end{exmp}

Given $z_1, \ldots, z_{n+1}$ nonnegative real numbers summing to $1$,
we are now able to define the transition probabilities for the overwriting multispecies Markov chain:
for $w,w'\in St_n^T$ if there exists $B\in\mathcal{B}_w$ such that $w'=w^B$,
the transition probability from $w$ to $w'$ reads
  \begin{equation}
	    \label{eq:anjugp}
    \mathcal{P}_{w,w'}= 
      \prod_{j=1}^{k} \Big(\prod_{\ell=t_{j-1}+1}^{t_j-1} (1-y_{J_w(b_{j-1}+1,\ell)})\Big) \prod_{j|t_j\neq T} z_{J_w(b_j,t_j)},
  \end{equation}
and it reads $0$ otherwise.

\begin{exmp}
	The transition matrix of the overwriting Markov chain on the state space
	$St_2^3$ in the ordered basis $(11,21,31,12,22,32,13,23,33)$ reads
	\begin{equation}
		\begin{pmatrix}
			z_1 & 0 & 0 & z_1(z_2+z_3) & 0 & 0 & (z_2+z_3)^2 & 0 & 0\\
			z_1 & 0 & 0 & z_1(z_2+z_3) & 0 & 0 & (z_2+z_3)^2 & 0 & 0\\
			z_1 & 0 & 0 & z_1(z_2+z_3) & 0 & 0 & (z_2+z_3)^2 & 0 & 0\\
			0 & z_1 & 0 & z_1z_2 & z_1z_3 & 0 & z_2(z_2+z_3) & z_3(z_2+z_3) & 0\\
			0 & z_1 & 0 & z_1z_2 & z_1z_3 & 0 & z_2(z_2+z_3) & z_3(z_2+z_3) & 0\\
			0 & z_1 & 0 & z_1z_2 & z_1z_3 & 0 & z_2(z_2+z_3) & z_3(z_2+z_3) & 0\\
			0 & 0 & z_1 & z_1z_2 & 0 & z_1z_3 & z_2(z_2+z_3) & z_2z_3 & z_3^2\\
			0 & 0 & z_1 & z_1z_2 & 0 & z_1z_3 & z_2(z_2+z_3) & z_2z_3 & z_3^2\\
			0 & 0 & z_1 & z_1z_2 & 0 & z_1z_3 & z_2(z_2+z_3) & z_2z_3 & z_3^2
		\end{pmatrix}.
	\end{equation}
	The stationary distribution is given by the row vector $
	(z_1^2, z_1^2 (1-y_1), \allowbreak 
	z_1 (1-y_1)^2, 
	z_1 y_2 (1-y_1),
	z_1^2 z_3 (1-y_1), z_1 z_3 (1-y_1)^2, y_2 (1-y_1)^2,
	z_3 y_2  (1-y_1)^2, z_3^2 (1-y_1)^2)$,
	which is the unique left eigenvector for the eigenvalue $1$.
\end{exmp}

The stationary distribution of the overwriting chain does not seem to have a simple formula in general, unlike the add-drop and annihilation multispecies variants. However, we do obtain an indirect formula using an enriched chain, which we state as Corollary~\ref{cor:ovstat} in Section~\ref{sec:overwr-enrich}. It turns out that the occupation probability for the last site and the joint occupation distributions at the last two sites have particularly simple expressions, which is what we state next.

\begin{thm}
\label{thm:special}
The stationary probability of having a $j$ at the last site is given by
\begin{equation}
\mathbb{P}(w_n = j) = 
\begin{cases}
z_1 (1-z_1)^{j-1} & \text{if $j<T$}, \\
(1-z_1)^{T-1} & \text{if $j=T$.}
\end{cases}
\end{equation}
The joint probability of having an $i$ at the $(n-1)$'th site and a $j$ at the $n$'th site is given by
\begin{multline}
\mathbb{P}(w_{n-1} = i,\, w_n = j) = (1-z_1)^{\max(i,j) - 1} (1-y_2)^{\min(i,j) -1}
\times \\
\begin{cases}
z_1y_2 & \text{if $i<j<T$}, \\
z_1^2 & \text{if $j\leq i<T$},\\\
y_2 & \text{if $i<j=T$},\\
z_1 & \text{if $j<i=T$},\\
1 & \text{if $i=j=T$}.
\end{cases}
\end{multline}
\end{thm}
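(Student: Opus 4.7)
The plan is to analyze the one-step dynamics of the overwriting chain restricted to the last two positions of the new state, and to observe that this restricted dynamics depends on the old state only through $w_n$. The crucial observation is that the virtual rightmost slot $b=n+1$ is permanently available since $w_{n+1}=+\infty$; so when a ball of type $\ell$ is tried, the probability it lands at position $n$ of the new state is always $z_1$, independently of everything else. Likewise, landing at position $n-1$ of the new state corresponds to placing the ball at $b=n$ in the original word, which is available iff $w_n>\ell$ and then has rank $2$ from the right, contributing probability $z_2$; otherwise this probability is $0$.

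For the marginal at site $n$, trying types $1,\ldots,T-1$ in sequence immediately yields $\mathbb{P}(w'_n=j\mid w)=(1-z_1)^{j-1}z_1$ for $j<T$ and $\mathbb{P}(w'_n=T\mid w)=(1-z_1)^{T-1}$, since at each step the type either lands at $n$ with probability $z_1$ and we stop, or fails to do so with probability $1-z_1$. Since this is independent of $w$, stationarity gives the first displayed formula for $\nu(j):=\mathbb{P}(w_n=j)$; as a useful byproduct, a telescoping sum gives $\mathbb{P}(w_n>i)=(1-z_1)^i$.

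For the joint distribution, I would introduce a three-state absorbing automaton with states $S_1$ (no ball yet in $\{n-1,n\}$), $S_2$ (a ball sits at $n-1$), and $S_3$ (a ball has landed at $n$, absorbing). The transition probabilities when trying a type $\ell$ ball read: from $S_1$ one goes to $S_3$ with probability $z_1$, to $S_2$ with probability $z_2\,\mathbf{1}[w_n>\ell]$, and stays in $S_1$ with the complementary probability, which equals $1-y_2$ if $\ell<w_n$ and $1-z_1$ if $\ell\geq w_n$; from $S_2$ one goes to $S_3$ with probability $z_1$ and stays with probability $1-z_1$. Conditioning on $w_n=m$ and walking through this automaton yields $\mathbb{P}(w'_{n-1}=i, w'_n=j\mid w_n=m)$ as an explicit product. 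There are two sources of $w'_{n-1}=i$: either a type-$i$ ball landed at $n-1$ (which requires $m>i$, and either $i<j$ or $j=T$), or no ball landed at $n-1$ and the shift rule $w'_{n-1}=w_n$ forces $m=i$.

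Finally, stationarity gives $\mathbb{P}(w_{n-1}=i,w_n=j)=\sum_m \nu(m)\,\mathbb{P}(w'_{n-1}=i,w'_n=j\mid w_n=m)$. Two compact identities drive the simplification: the identity $\mathbb{P}(w_n>i)=(1-z_1)^i$ noted above, which collapses the sum over $m>i$ in the ``landed at $n-1$'' contribution into a single factor $(1-z_1)^i$, and the collapse $z_1+z_2=y_2$, which merges that contribution with the ``inherited $w_n$'' contribution into a single factor $y_2$ whenever $i<j$. The main obstacle is the careful case analysis one must perform, distinguishing the five regimes $i<j<T$, $j\leq i<T$, $i<j=T$, $j<i=T$, and $i=j=T$, since whether $\ell<i$ or $\ell\geq i$ determines which of $1-y_2$ or $1-z_1$ appears as the stay-in-$S_1$ probability during the trials of types $1,\ldots,j-1$, and the cases $j=T$ or $i=T$ need to be handled separately because $\nu(T)$ has a different form than $\nu(m)$ for $m<T$.
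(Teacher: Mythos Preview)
Your approach is correct but takes a genuinely different route from the paper. The paper proves this theorem via the staircase tableaux enrichment: it identifies $w_n$ with the content of the topmost cell of the tableau $V$ and $w_{n-1}$ with the leftmost entry of its second row, and then computes the stationary probability of each relevant top-left configuration by tracing the column-filling dynamics of the enriched chain over two consecutive steps. You instead work directly with the overwriting chain, exploiting the observation that the one-step law of $(w'_{n-1},w'_n)$ depends on the old state only through $w_n$; this lets you close the recursion using only the already-established marginal $\nu(m)=\mathbb{P}(w_n=m)$ and stationarity.

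Both arguments rest on the same underlying fact---the virtual slot $b=n+1$ is always first available (so ``land at position $n$'' has probability $z_1$ regardless of history) and the slot $b=n$ is second available iff $w_n>\ell$---but they package it differently. The paper's route has the advantage of sitting inside the framework that eventually yields the full stationary law $\Pi(V)$ of the enriched chain; your automaton argument is more self-contained and avoids introducing tableaux altogether, at the cost of having to sum over $m$ and split into the two sources of $w'_{n-1}=i$ (a type-$i$ ball landing at $b=n$, versus inheritance $w'_{n-1}=w_n=i$). Your identification of the two simplifications that drive the computation---the telescoping $\sum_{m>i}\nu(m)=(1-z_1)^i$ and the merge $z_1+z_2=y_2$ that fuses the ``landed'' and ``inherited'' contributions when $i<j$---is exactly right and matches what one would need to recover the closed form in each of the five regimes.
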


The proof is given in the following section, where we construct an enriched chain and analysing the transitions therein.

\subsubsection{Staircase tableaux enrichment}
\label{sec:overwr-enrich}

It is now natural to look at staircase tableaux, as was done for 
the original juggling model in \cite{EngstromLeskelaVarpanen}.
The state space for the enriched version of the overwriting chain on $St_n^T$ is the set of Young tableaux of shape $(n,n-1,\ldots,2,1)$,
with the following conditions on the entries in cells.
\begin{enumerate}
	\item Entries belong to the set $\{1,\ldots,T-1\}$.
	\item Entries appear in increasing order from left to right, 
	and from bottom to top (the diagrams are drawn in French notation).
	\item Empty cells are allowed.
\end{enumerate}
This set of tableaux is denoted by $\mathcal{T}_n^T$.
For $V\in \mathcal{T}_n^T$, we will denote by $V_{*,k}$ the $k$'th column, from the left, of $V$ (which has $n+1-k$ cells). 

The transitions of the enriched Markov chain are as follows. At each step, the entries in the bottom row are deleted, all remaining entries are moved one step down and one step right, and we add entries to the leftmost column so that the tableaux conditions above still hold. More precisely, we proceed in the following way.
We first try to add a $1$ by choosing a number $k_1$ in $\{ 1,\ldots,n+1 \}$ and placing a $1$ in the $k_1$'th free position
(from top to bottom; a position is ``free'' if and only if there is no $1$ in the same row). If $k_1$ is greater than
the number of free positions, no $1$ is added.
We then similarly try to add a $2$ by choosing $k_2$ in $\{ 1,\ldots,n+1 \}$ and placing a $2$ in the $k_2$'th free position,
a position being free if there is no $1$ or $2$ in the same row, and having no $1$ above it. We continue this way until all numbers between $1$ and $T-1$ have been tried.

For $V \in \mathcal{T}_n^T$, for $i \in \{1,\ldots,T\}$ and $k \in \{1,\ldots,n\}$,
we introduce the useful notation
\begin{equation} \label{contrib}
\mathcal{C}_V(i,k) =
\begin{cases}
 z_{1+\#\{\text{cells above entry $i$ in $V_{*,k}$, with no entry $j\leq i$ to the right}\}} & \\
 \hfill \text{ if $i$ is in $V_{*,k}$,} &\\
 1-y_{\# \{\text{cells in $V_{*,k}$ with no entry $j\leq i$ in, to the right or on top}\}} & \\
 \hfill \text{ otherwise.} &
\end{cases}
\end{equation}
Here we use the convention that $y_0=0$.
\begin{exmp}
Figure \ref{fig:tabtrans} gives the example of a state in $\mathcal{T}_4^4$, and all possible states that it can transition to (with transition probabilities below the corresponding arrows). If we call $V$ the topmost tableau, we have for example $C_V(3,1)=1-y_0=1$ and $C_V(2,2)=z_2$.
\end{exmp}

\begin{figure}[ht]
	\centering
		\includegraphics[width=0.7\textwidth]{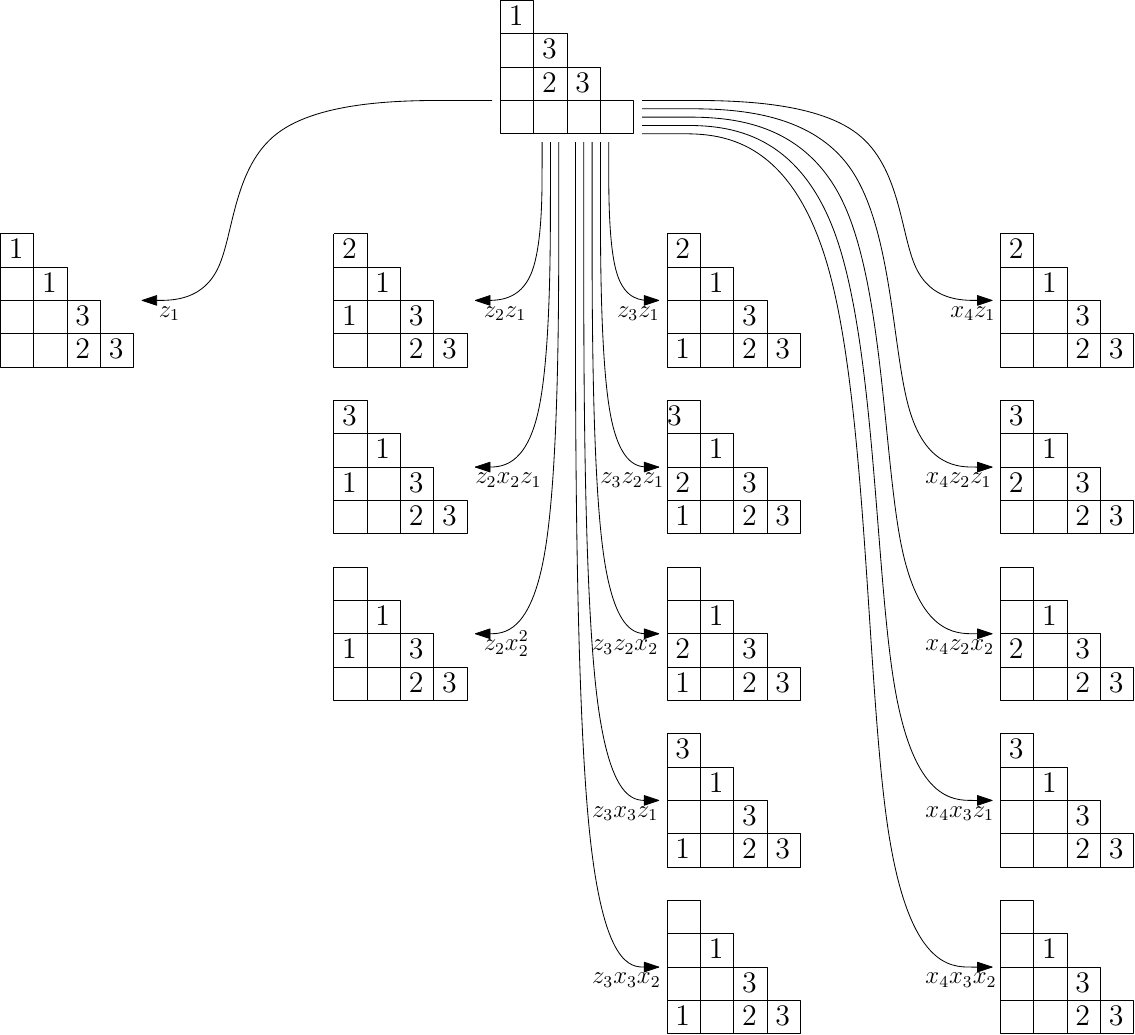}
	\caption{A state in $\mathcal{T}_4^4$ and all its successors. Here $x_i=1-y_{i-1}$.}
	\label{fig:tabtrans}
\end{figure}

The probability of such a transition $V\to W$ is then given by 
\begin{equation} \label{eq:tabltrans}
\mathcal{P}_{V,W} =
\begin{cases}
0 & \text{if $W_{i,j}\neq V_{i-1,j-1}$ for some $2\leq j\leq i \leq n$,} \\
\ds \prod_{i=1}^{T-1} \mathcal{C}_W(i,1) & \text{otherwise.}
\end{cases}
\end{equation}
This chain lumps to the overwriting model by the following procedure.
Let the rows of the tableaux be numbered from bottom to top.
For $V$ a tableau in $\mathcal{T}_n^T$, we define, for $k\in \{1,\ldots,n\}$,
$a_V(k)$ as the leftmost entry on the $(n-k+1)$'th row of $V$, and as $T$ otherwise.
The resulting lumped word $w \in St^T_n$ is then given by 
\begin{equation} \label{lumpstaircase}
w = a(V) := a_V(1)\cdots a_V(n).
\end{equation}
One can check that this procedure satisfies all the conditions for lumping; see Remark~\ref{rem:lumping}.
We are now in a position to prove Theorem~\ref{thm:special}.

\begin{proof}[Proof of Theorem~\ref{thm:special}]
The probability of having a $j$ in the last site of $w\in St_n^T$
in the overwriting chain is the same as the probability of having
a $j$ in the topmost cell of $V\in \mathcal{T}_n^T$ if $j<T$,
or of having nothing in this cell if $j=T$. This means
that any number $i<j$ failed to reach the first available cell, which happens
with probability $1-z_1$ for each one of them, and if $j<T$, $j$ reached that cell,
which happens with probability $z_1$, which proves the first part of the theorem.

For the second part of the theorem, we will only treat the case $i<j<T$,
since all the cases are proved in a similar fashion.
The joint probability of having a $i$ in the $(n-1)$'th site and
a $j$ in the $n$'th site of $w\in St_n^T$ is the same as the probability
of being in one of the two configurations of Figure \ref{fig:special}.

\begin{figure}[h]
	\centering
		\includegraphics{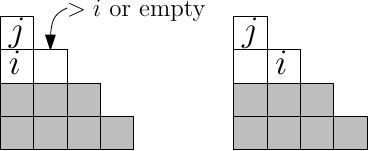}
	\caption{The two possible cases for $i<j<T$  in the proof of the second part of 
			Theorem~\ref{thm:special}.}
	\label{fig:special}
\end{figure}

The transition probability into the first configuration is
\begin{multline} \label{eq:conf1}
\underbrace{(1-z_1)^i}_{\substack{\text{no $k\leq i$ in the topmost} \\
\text{cell of the $2^{\text{nd}}$ column}}} \times
\underbrace{(1-y_2)^{i-1}}_{\substack{\text{no $k<i$ in the top $2$} \\
\text{cells of the $1^{\text{st}}$ column}}}
\times \underbrace{z_2}_{\substack{\text{$i$ is in the $2^{\text{nd}}$} \\
\text{cell of the $1^{\text{st}}$ column}}} \\
\times \underbrace{(1-z_1)^{j-i-1}}_{\substack{\text{no $k$ between $i+1$ and $j-1$ in the} \\ \text{topmost cell of the $1^{\text{st}}$ column}}}
\times \underbrace{z_1}_{\substack{\text{$j$ is in the topmost} \\
\text{cell of the $1^{\text{st}}$ column}}}
= (1-z_1)^{j-1}(1-y_2)^{i-1}z_1 z_2,
\end{multline}
while the transition probability into the second configuration is
\begin{multline} \label{eq:conf2}
\underbrace{(1-z_1)^{i-1}}_{\substack{\text{no $k<i$ in the topmost} \\
\text{cell of the $2^{\text{nd}}$ column}}} 
\times \underbrace{z_1}_{\substack{\text{$i$ is in the topmost} \\
\text{cell of the $2^{\text{nd}}$ column}}}
\times \underbrace{(1-y_2)^{i-1}}_{\substack{\text{no $k<i$ in the top $2$} \\
\text{cells of the $1^{\text{st}}$ column}}} \\
\times \underbrace{(1-z_1)^{j-i}}_{\substack{\text{no $k$ between $i$ and $j-1$ in the} \\ \text{topmost cell of the $1^{\text{st}}$ column}}}
\times \underbrace{z_1}_{\substack{\text{$j$ is in the topmost} \\
\text{cell of the $1^{\text{st}}$ column}}}
= (1-z_1)^{j-1}(1-y_2)^{i-1}z_1^2.
\end{multline}
By summing \eqref{eq:conf1} and \eqref{eq:conf2}, we get the desired probability.
\end{proof}

The idea of this proof also hints at why the stationary distribution of the overwriting Markov chain is not of a simple form. However, we will show that the stationary distribution of the enriched Markov chain on staircase tableaux has a particularly nice structure.

\begin{thm} \label{thm:tablstat}
The stationary distribution of $V\in \mathcal{T}_n^T$
for the staircase tableaux enriched  Markov chain is given by
\begin{equation}
\Pi(V) = \prod_{k=1}^n \prod_{i=1}^{T-1} \mathcal{C}_V(i,k),
\end{equation}
with the normalisation factor $(z_1+\cdots + z_{n+1})^{n(T-1)} = 1$.
\end{thm}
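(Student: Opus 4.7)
The plan is to verify the stationarity identity $\sum_V \Pi(V)\mathcal{P}_{V,W} = \Pi(W)$ for every $W \in \mathcal{T}_n^T$ and the normalization $\sum_V \Pi(V) = 1$. The key device is a sequential placement reading of the column factor $\prod_{i=1}^{T-1}\mathcal{C}_V(i,k)$: given the columns of $V$ to the right of column $k$, place entries $i=1,2,\ldots,T-1$ into column $V_{*,k}$ one at a time, drawing $r \in \{1,\ldots,n+1\}$ with probability $z_r$ and placing $i$ in the $r$-th available cell from the top if $r\leq c'$, the current number of available cells, and otherwise failing to place $i$. Under this reading $z_{1+c}$ is the probability of landing in position $c+1$ and $1-y_{c'}$ is the probability of not landing, so $\prod_i \mathcal{C}_V(i,k)$ is a probability distribution over column configurations. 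Iterating right-to-left in $k$ yields $\sum_V \Pi(V) = 1$, giving the claimed normalization.

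For stationarity I would observe that $\mathcal{P}_{V,W} = \prod_{i=1}^{T-1}\mathcal{C}_W(i,1)$ for every valid predecessor (and zero otherwise), while the predecessors of $W$ are obtained by undoing the shift and freely choosing the bottom row $(V_{n,1},\ldots,V_{n,n})$ of $V$ subject to the tableau constraints. Factoring the common transition probability out of the sum reduces stationarity to the identity
\[
\sum_V \Pi(V) = \prod_{k=2}^{n}\prod_{i=1}^{T-1}\mathcal{C}_W(i,k),
\]
where the sum is over the bottom row of $V$.

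The heart of the argument is a marginalization lemma: for each $k\in\{1,\ldots,n-1\}$,
\[
\sum_{V_{n,k}}\prod_{i=1}^{T-1}\mathcal{C}_V(i,k) = \prod_{i=1}^{T-1}\mathcal{C}_W(i,k+1),
\]
with the convention that the sum equals $1$ when $k=n$, since column $n$ of $V$ consists only of the cell $V_{n,n}$. To prove the lemma I would analyze the placement procedure for column $V_{*,k}$ and observe that the probability that $i$ lands in the $r$-th available \emph{upper} cell is $z_r$ regardless of whether the bottom cell is available, while the two alternatives ``$i$ lands in the bottom'' and ``$i$ is not placed'' together carry total mass $1-y_{c_u}$, where $c_u$ is the number of available upper cells. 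Since the availability of an upper cell is determined only by the upper part of column $k$ and by the upper parts of the columns to the right, and not by any bottom-row entry, marginalizing out $V_{n,k}$ reduces the placement for column $V_{*,k}$ to the placement for its upper part alone, which under the identification $W_{i+1,j+1}=V_{i,j}$ is precisely the placement for $W_{*,k+1}$ within its environment.

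The final step is a left-to-right telescoping. Note first that $V_{n,1}$ appears only in $\prod_i \mathcal{C}_V(i,1)$, so the innermost sum $\sum_{V_{n,1}}$ applies the lemma and replaces that factor by $\prod_i\mathcal{C}_W(i,2)$, which depends only on upper parts of $V$. The next bottom entry $V_{n,2}$ then appears only in $\prod_i\mathcal{C}_V(i,2)$, its occurrences in the original $\prod_i\mathcal{C}_V(i,1)$ having been eliminated, so the lemma applies again, and so on up to $k=n$ whose summand is trivially $1$. Multiplying back by the external $\prod_i\mathcal{C}_W(i,1)$ coming from $\mathcal{P}_{V,W}$ restores $\prod_{k=1}^n \prod_i \mathcal{C}_W(i,k) = \Pi(W)$. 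The main technical obstacle will be the marginalization lemma itself: one must check carefully that the upper-availability counter $c_u$ at each step $i$ depends neither on the current bottom entry of column $k$ nor on any prior placement of a smaller value into that bottom cell, so that summing out the bottom cleanly decouples the upper placements from bottom-row data.
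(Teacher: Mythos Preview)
Your approach is correct but takes a different route from the paper. The paper does not verify the stationarity equation for the tableau chain directly; instead it introduces a \emph{doubly enriched} Markov chain on the set $\tilde{\mathcal{T}}_n^T$ of $(T-1)\times n$ matrices with entries in $\{1,\ldots,n+1\}$, whose transitions simply shift columns to the right and insert an arbitrary new first column with weight $\prod_i z_{N_{i,1}}$. For that chain the stationary measure $\tilde\Pi(M)=\prod_{i,j} z_{M_{i,j}}$ is immediate, and the paper defines a lumping $A:\tilde{\mathcal{T}}_n^T\to\mathcal{T}_n^T$ by using the matrix entries as placement instructions column by column (right to left). Theorem~\ref{thm:tablstat} is then obtained by computing $\sum_{M\in A^{-1}(V)}\tilde\Pi(M)$, which factors over the pairs $(i,k)$ and yields $\mathcal{C}_V(i,k)$ in each factor.

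Your ``sequential placement reading'' is precisely this lumping map $A$ in disguise, so your normalization argument coincides with the paper's. What is genuinely different is your stationarity argument: rather than invoking the lumping machinery, you verify $\sum_V\Pi(V)\mathcal P_{V,W}=\Pi(W)$ directly via the marginalization lemma and a left-to-right telescoping over the bottom row of $V$. The delicate point you correctly identify --- that the upper-availability counter $c_u^{(i)}$ for column $k$ is insensitive both to the content of the bottom cell $V_{n,k}$ and to the bottom-row entries $V_{n,k+1},\ldots,V_{n,n}$ to its right --- does go through: the upper free cells at step $i$ are determined solely by the upper part of column $k$ and the upper rows to its right, and by which of the previously placed values $i'<i$ landed in upper cells, which by induction is itself a function of the same data. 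Hence marginalizing out the bottom cell cleanly reproduces the placement law for the shorter column $W_{*,k+1}$, regardless of the remaining bottom-row entries, and the telescoping is valid. Your route is more self-contained (it avoids checking the lumping condition \eqref{eq:lumping} for the map $A$, which the paper leaves to the reader), while the paper's route has the payoff of explaining the ultrafast convergence via the trivial matrix dynamics.
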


We will prove this formula by considering an even larger enlargement of the Markov chain on staircase tableaux, analogous to the doubly enriched chain of the single species annihilation model in \cite[Definition 4.14]{ABCN}. An immediate corollary of this result is the formula for the stationary distribution of the overwriting Markov chain.

\begin{cor} \label{cor:ovstat}
The stationary probability of $w=w_1\cdots w_n \in St_n^T$ for the overwriting
model is given by
\begin{equation} \label{eq:ovstat}
\pi(w)= \sum_{\substack{V \in \mathcal{T}_n^T \\ a(V) = w}} \Pi(V),
\end{equation}
where $a(V)$ is defined in \eqref{lumpstaircase}.
\end{cor}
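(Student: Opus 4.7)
My plan is to derive this corollary as a direct consequence of Theorem~\ref{thm:tablstat} combined with the general lumping principle spelled out in Remark~\ref{rem:lumping}. The setup is already in place: the staircase tableaux Markov chain on $\mathcal{T}_n^T$ plays the role of the enriched chain, and the map $a: \mathcal{T}_n^T \to St_n^T$ defined in \eqref{lumpstaircase} plays the role of the projection. The equivalence classes under the relation $V \sim V' \Longleftrightarrow a(V) = a(V')$ are precisely the fibers $a^{-1}(w)$ for $w \in St_n^T$, so the right-hand side of \eqref{eq:ovstat} is exactly the sum prescribed by \eqref{eq:pisum}.

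The only nontrivial thing to check (beyond Theorem~\ref{thm:tablstat} itself) is the lumping condition \eqref{eq:lumping}, namely that for any $V$ with $a(V)=w$ and any target $w' \in St_n^T$,
\[
\sum_{W\,:\,a(W)=w'} \mathcal{P}_{V,W} \;=\; \mathcal{P}_{w,w'}
\]
with $\mathcal{P}_{w,w'}$ the overwriting probability of \eqref{eq:anjugp}. This is the content of the sentence following \eqref{lumpstaircase}. To make it concrete, I would observe that the rule $W_{i,j}=V_{i-1,j-1}$ from \eqref{eq:tabltrans} determines every cell of $W$ outside the leftmost column, so the only freedom is the pattern of entries in $W_{*,1}$. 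Those placements in $W_{*,1}$ that produce $a(W)=w'$ are in bijection with the overwriting sequences $B\in \mathcal{B}_w$ satisfying $w^B=w'$: an entry $t_j$ occupies the $(b_j-1)$-th cell of the first column precisely when the $j$-th attempt in the overwriting procedure lands, and an empty run of cells in $W_{*,1}$ corresponds to a sequence of failed attempts. Under this bijection the product $\prod_{i=1}^{T-1}\mathcal{C}_W(i,1)$ in \eqref{eq:tabltrans} splits into the success factors \eqref{eq:success} for the $t_j$'s and the failure factors \eqref{eq:owfail} for the skipped types, which is precisely the grouping in \eqref{eq:anjugp}.

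Once lumping is granted, the corollary is immediate. Theorem~\ref{thm:tablstat} identifies $\Pi$ as a stationary distribution of the enriched chain; by the general identity \eqref{eq:pisum}, its pushforward $w\mapsto \sum_{V\in a^{-1}(w)}\Pi(V)$ is a stationary distribution of the overwriting chain; and by the irreducibility and aperiodicity noted in the remark opening Section~\ref{sec:multiadd} (which explicitly extends to the overwriting model), this stationary distribution is unique, so it must equal $\pi$. This gives \eqref{eq:ovstat}.

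At the level of this corollary there is essentially no genuine obstacle---all the substance sits upstream in Theorem~\ref{thm:tablstat}. The only delicate step is the matching of cell-contributions $\mathcal{C}_W(i,1)$ with the overwriting weights in the lumping verification; but once the correspondence between placements in the first column of $W$ and overwriting sequences $B$ is laid out, the identification of factors is a rearrangement of products, and the conclusion follows.
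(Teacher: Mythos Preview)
Your proposal is correct and follows exactly the approach implicit in the paper: the corollary is presented there as an immediate consequence of Theorem~\ref{thm:tablstat} together with the lumping already asserted after \eqref{lumpstaircase}, and you have simply unpacked those two ingredients and the uniqueness from irreducibility/aperiodicity. Your additional paragraph matching the first-column contributions $\mathcal{C}_W(i,1)$ to the overwriting weights \eqref{eq:anjugp} is more detail than the paper gives (it only says ``one can check''), but it is the same argument.
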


\subsubsection{The doubly enriched chain}
\label{sec:trivia}

In this section we will construct a generalisation of the doubly enriched chain of 
the single species annihilation model in \cite[Definition 4.14]{ABCN}, which was a Markov chain on words of length $n$. In this case, the natural extension of this Markov chain is to matrices. The state space $\tilde{\mathcal{T}}_n^T$ is the set of matrices with $T-1$ rows and $n$ columns with entries in $\{1,\ldots,n+1\}$. 

It is clear there are $(n+1)^{n(T-1)}$ different states. As usual, given $z_1,\ldots, \allowbreak z_{n+1}$ nonnegative real numbers summing to $1$, the transitions are defined
as follows. For $M \in \tilde{\mathcal{T}}_n^T$, all transitions from 
$M$ are obtained by deleting the last column in $M$, shifting all the remaining columns to the right by one, and adding an arbitrary column on the left. The probability of this transition is given by the product of factors $z_i$ for each element $i$ in the resulting first column. More precisely,
\begin{equation} \label{eq:trivtrans}
\tilde{\mathcal{P}}_{M,N}=
\begin{cases}
\ds \prod_{i=1}^{T-1} z_{N_{i,1}} & \text{if $N_{k,l}= M_{k,l-1}$ for all $k\in \{1,\ldots,T-1\}$} \\
& \hfill \text{and all $l\in \{2,\ldots,n\}$,}\\
 0 & \text{otherwise.}
\end{cases}
\end{equation}
Since this is a product of $T-1$ independent copies of the single row Markov chain, it is clear this Markov chain is recurrent. Further, the stationary distribution is given, for $M\in \tilde{\mathcal{T}}_n^T$, by the product
\begin{equation} \label{eq:trivstat}
\tilde{\Pi}(M) = \prod_{i=1}^{T-1} \prod_{j=1}^n z_{M_{i,j}}
\end{equation}
with normalisation $(z_1 + \cdots + z_n)^{n(T-1)}=1$.\\

\begin{rem} \label{rem:ultrafast}
The dynamics of the doubly enriched chain guarantee that the stationary distribution is reached after $n$ steps (indeed, the first state has been completely forgotten after $n$ steps), which is the desired ultrafast convergence property. Equivalently, $n$ is a strong stationary time for this chain.
\end{rem}

An immediate consequence of Remark~\ref{rem:ultrafast} is a complete description of the spectrum of the transition matrix, given by the following theorem.

\begin{thm}
Let $n, T\in \mathbb{N}$, and let $M$ be the transition matrix of the doubly enriched chain on $\tilde{\mathcal{T}}_n^T$.
The eigenvalues for $M$ are $1$ with multiplicity $1$ and $0$ with
multiplicity $n-1$.
\end{thm}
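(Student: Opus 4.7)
The plan is to leverage Remark~\ref{rem:ultrafast} together with the explicit stationary distribution \eqref{eq:trivstat}. First I would make the ultrafast convergence property quantitative by writing it as the matrix identity $M^n = \mathbf{1}\, \tilde{\Pi}^{\top}$, where $\mathbf{1}$ is the all-ones column vector indexed by $\tilde{\mathcal{T}}_n^T$ and $\tilde{\Pi}$ is viewed as a column vector. This follows straight from iterating the rule \eqref{eq:trivtrans}: after $n$ applications every column of the original matrix state has been discarded and replaced by an independent fresh draw, so the entries of $M^n$ depend only on the destination state and are given by $\tilde{\Pi}$.

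Next I would deduce the identity $M^{n+1}=M^n$. Indeed $M\cdot \mathbf{1}=\mathbf{1}$ by stochasticity, so $M\cdot M^n = M\cdot\mathbf{1}\,\tilde{\Pi}^{\top} = \mathbf{1}\,\tilde{\Pi}^{\top} = M^n$. Consequently the minimal polynomial of $M$ divides $x^{n+1}-x^n = x^n(x-1)$, from which one reads off that every eigenvalue of $M$ lies in $\{0,1\}$, that the eigenvalue $1$ is semisimple (the factor $x-1$ is simple), and that Jordan blocks at the eigenvalue $0$ have size at most $n$.

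To pin down the algebraic multiplicities I would identify $M^n$ itself as the spectral projector onto the $1$-eigenspace. Setting $N=(n+1)^{n(T-1)}$, the annihilating polynomial $x^n(x-1)$ gives the decomposition into generalised eigenspaces $\ker(M-I)\oplus\ker(M^n)$; since $M^n$ acts as the identity on $\ker(M-I)$ and as $0$ on $\ker(M^n)$, it is precisely the spectral projector for eigenvalue $1$. Combined with $\operatorname{rank}(M^n)=1$, this forces the algebraic multiplicity of $1$ to equal $1$, and the eigenvalue $0$ therefore accounts for the remaining $N-1$ dimensions.

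I do not expect any serious obstacle: the whole argument is powered by the two facts $M^{n+1}=M^n$ and $\operatorname{rank}(M^n)=1$, both immediate from Remark~\ref{rem:ultrafast}. The only delicate step is the identification of $M^n$ with the spectral projector for eigenvalue $1$, which is however a standard consequence of $1$ being a simple root of the annihilating polynomial, so it amounts to a short linear-algebra verification rather than a substantive difficulty.
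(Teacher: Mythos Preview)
Your argument is correct and follows essentially the same route as the paper: both proofs observe that after $n$ steps the initial matrix state has been entirely overwritten, so $M^n$ is the rank-one matrix whose rows are all equal to the stationary vector $\tilde{\Pi}$, deduce $M^{n+1}=M^n$ and hence that $x^n(x-1)$ annihilates $M$, and conclude that the spectrum is $\{0,1\}$. Your treatment is in fact slightly more careful than the paper's: where the paper simply asserts that $1$ has multiplicity $1$, you make this explicit by identifying $M^n$ with the spectral projector onto the $1$-eigenspace and using $\operatorname{rank}(M^n)=1$. Note also that your conclusion, namely that $0$ has multiplicity $N-1$ with $N=(n+1)^{n(T-1)}$, is the correct count; the ``$n-1$'' in the theorem statement is a typo in the paper.
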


\begin{proof}
As stated before, the stationary distribution is reached after $n$ steps.
This means that $M^n$ is the Matrix with all the rows being the left normalised eigenvector for $M$ (which represents the stationary distribution). Thus,
$M^{n+1}=M^n$, and therefore $X^{n+1}-X^n$ is a nullifying polynomial for $M$. This shows that $1$ is an eigenvalue of multiplicity $1$ (we already knew that its multiplicity was at least $1$) and that $0$ is the only other eigenvalue.
\end{proof}

The doubly enriched Markov chain on matrices described above lumps onto the singly-enriched chain on staircase tableaux. 
Let $M$ be in $\tilde{\mathcal{T}}_n^T$. 
We will construct a tableau $T$ associated to $M$ by starting with an empty tableau. We then fill $T$ according to the following pseudocode.

\noindent $\bullet$ for $k$ decreasing from $n$ down to $1$:\\
\indent $\bullet$ for $i$ increasing from $1$ to $T-1$:\\
\indent \indent  $\bullet$ if $M_{i,k}$ is less than or equal to the number of available positions in \\
\indent \indent \indent  the $k$'th column of $T$: \\
\indent \indent \indent \indent $\bullet$ insert $i$ in the $M_{i,k}$'th position from the top\\
\indent \indent $\bullet$ else: \\
\indent \indent \indent \indent $\bullet$ do not insert $i$. \\
We then set $A(M) := T$.
One can check that this algorithm leads to an actual lumping between these two Markov chains; see Remark~\ref{rem:lumping}. 

\begin{exmp}
Figure \ref{fig:owlump} gives the example of a matrix $M$ in $\tilde{\mathcal{T}}_4^3$ and
the tableau $T$ in $\mathcal{T}_4^3$ it lumps onto.
This tableau $T$ can then be lumped onto $w=1243$ in $St_4^3$.
\begin{figure}[ht]
	\centering
		\includegraphics{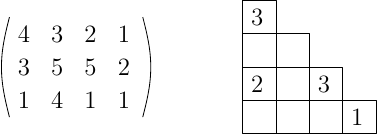}
	\caption{A matrix in $\tilde{\mathcal{T}}_4^3$ and the tableau in 
			$\mathcal{T}_4^3$ that it lumps to.}
	\label{fig:owlump}
\end{figure}
\end{exmp}

\begin{rem}
Since lumpings preserve the ultrafast convergence property, both the chain on staircase tableaux on $\mathcal{T}^T_n$ and the overwriting Markov chain on $St^T_n$
converge in $n$ steps.	
\end{rem}

We now prove the formula for the stationary distribution of the Markov chain on staircase tableaux.

\begin{proof}[Proof of Theorem~\ref{thm:tablstat}]
We have to check that, for each 	$V \in \mathcal{T}_n^T$,
\begin{equation}
\sum_{\substack{M \in \mathcal{T}^T_n \\ M \in A^{-1}(V)}} \tilde{\Pi}(M) = \Pi(V).
\end{equation}
To do so, let $k \in \{1,\ldots, n\}$ and $i \in \{1, \ldots, T-1\}$. If $i$ appears in the $k$'th column of $V$, then every $M$ in $\tilde{\mathcal{T}}_n^T$ projecting to $V$ must be such that $M_{i,k}$ is equal to the number of cells above entry $i$ in $V_{*,k}$ with no entry $j\leq i$ to the right.
Similarly if $i$ does not appear in the $k$'th column of $V$, then $M_{i,k}$ must be greater than the number of cells in $V_{*,k}$ with no entry $j\leq i$ in, to the right or atop of them.
In both cases, summing $z_\ell$ over all the possible values $\ell$ of $M_{i,k}$ gives us $C_V(i,k)$, and thus proves the result.
\end{proof}

\section{Several jugglers}
\label{sec:sevjug}

We now consider a completely different generalisation of Warrington's
model \cite{Warrington}. Instead of a multivariate or multispecies
generalisation, we will now consider that there are several jugglers,
and that each one of them can send the balls she catches to any other
juggler. We model this situation as follows. For $r,c,\ell$
nonnegative integers such that $\ell\leq rc$, we denote by $S_{r\times
  c}$ the set of rectangular arrays with $r$ rows and $c$ columns,
such that each cell either is empty or contains a ball, and by
$S_{r\times c,\ell} \subset S_{r \times c}$ the subset of arrays
containing exactly $\ell$ balls. Each column represents the balls that
are sent to a specific juggler. For $A$ and $B$ two arrays in
$S_{r\times c}$, we denote $A^-$ the array obtained by removing all
the balls in the lowest row, and moving all the other balls down one
row (hence the topmost row of $A^-$ is always empty). We write
$A\subset B$ if all the balls in $A$ are also in $B$.  For $i$ between
$1$ and $r$, we denote by $A_i$ the number of balls in the $i$'th row
(rows are numbered from top to bottom).

The several jugglers Markov chain is the Markov chain on the state space
$S_{r\times c,\ell}$ whose transition probabilities read, for 
$A,B \in S_{r\times c,\ell}$,
\begin{equation}
  \label{eq:sevjugp}
  \mathcal{P}_{A,B} = 
  \begin{cases}
    \ds \frac{1}{\binom{rc-\ell+A_r}{A_r}} & \text{if $A^-\subset B$},\\
    0 & \text{otherwise.}
  \end{cases}
\end{equation}
Here, $A_r$ is the number of balls in the lowest row of $A$, which is
exactly the number of balls the jugglers will have to send back. These
$A_r$ balls are reinjected uniformly in the $rc-\ell+A_r$ available
positions, under the constraint that no two balls go to the same
position. Note that there are no balls reinjected when $A_r = 0$, and
$\mathcal{P}_{A,A^-}=1$ in this case. The irreducibility and
aperiodicity of the several jugglers Markov chain are easy to check.

\begin{figure}
	\centering
		\includegraphics{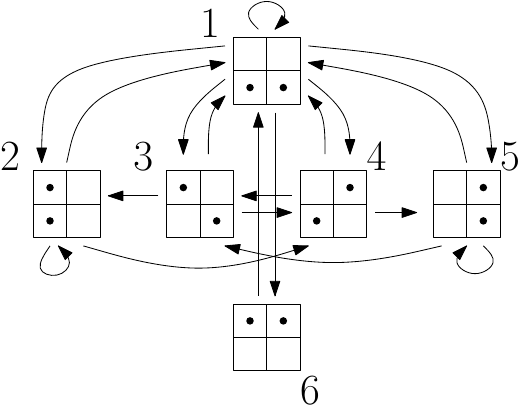}
	\caption{The several jugglers Markov chain on the state space $S_{2\times 2,2}$.}
	\label{fig:sevjug}
\end{figure}

\begin{exmp}
The transition Matrix of the several jugglers Markov chain on the state space
$S_{2\times 2,2}$ in the basis ordered $(1,2,3,4,5,6)$ on 
Figure \ref{fig:sevjug} reads
  \begin{equation}
    \begin{pmatrix}
     \frac{1}{6} & \frac{1}{6} & \frac{1}{6} & \frac{1}{6} & \frac{1}{6} & \frac{1}{6}\\
		 \frac{1}{3} & \frac{1}{3} & 0 & \frac{1}{3} & 0 & 0\\
		 \frac{1}{3} & \frac{1}{3} & 0 & \frac{1}{3} & 0 & 0\\
		 \frac{1}{3} & 0 & \frac{1}{3} & 0 & \frac{1}{3} & 0\\
		 \frac{1}{3} & 0 & \frac{1}{3} & 0 & \frac{1}{3} & 0\\
		 1 & 0 & 0 & 0 & 0 & 0\\
    \end{pmatrix}.
  \end{equation}
Note that $\left( 6,3,3,3,3,1 \right)$ is a left eigenvector for the eigenvalue $1$.
\end{exmp}

Again, we have an explicit expression for the stationary distribution
of this Markov chain. 

\begin{thm}
\label{thm:sevjugstat}

The stationary probability of $A \in S_{r\times c,\ell}$ for the
several jugglers Markov chain reads
\begin{equation}
  \label{eq:sevjugstat}
  \pi(A) = \frac{1}{Z_{r\times c,\ell}} \prod_{i=1}^r \left(ci-A_{<i}\right)_{A_i}
\end{equation}
where $A_{<i}=A_1+\cdots+A_{i-1}$ is the number of balls strictly above row $i$, $(x)_n=x(x-1)\cdots(x-n+1)$ is the Pochhammer symbol and $Z_{r\times c,\ell}$ is the normalisation factor.
\end{thm}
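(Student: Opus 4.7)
The plan is to verify directly that the formula~\eqref{eq:sevjugstat} gives a left eigenvector of $\mathcal{P}$ with eigenvalue $1$, i.e.\ that $\sum_{A \in S_{r\times c,\ell}} \pi(A)\,\mathcal{P}_{A,B} = \pi(B)$ for every $B$. Combined with the irreducibility and aperiodicity of the chain (mentioned after~\eqref{eq:sevjugp}), this determines $\pi$ uniquely up to the global factor $Z_{r\times c,\ell}$.

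The verification simplifies considerably because of a telescoping at the last factor of the product defining $\pi(A)$. Using $A_{<r}+A_r=\ell$ to rewrite $rc-\ell+A_r=cr-A_{<r}$, one gets
$$(cr-A_{<r})_{A_r}\cdot \mathcal{P}_{A,B} \;=\; A_r!$$
for every $B$ reachable from $A$ (i.e.\ with $A^-\subseteq B$), so that
$$\pi(A)\,\mathcal{P}_{A,B} \;=\; A_r!\,\prod_{i=1}^{r-1}(ci-A_{<i})_{A_i}.$$
Next I would parametrize the predecessors of a fixed $B$: if we write the rows of $B$ as column-sets $T_1,\ldots,T_r$, then a predecessor $A$ consists, for each $i<r$, of an arbitrary subset of $T_{i+1}$ (the balls of $A$'s row $i$ that will be shifted to row $i+1$ of $B$), plus an arbitrary subset of $\{1,\ldots,c\}$ for row $r$. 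Since the summand depends only on the sizes $a_i=A_i$, collecting by multiplicities and using $\binom{c}{a_r}a_r!=(c)_{a_r}$ reduces the stationarity equation to the purely combinatorial identity
$$\sum_{a_1,\ldots,a_{r-1}} (c)_{a_r}\prod_{i=1}^{r-1}\binom{B_{i+1}}{a_i}(ci-a_{<i})_{a_i} \;=\; \prod_{i=1}^{r}(ci-B_{<i})_{B_i},$$
where $a_r=\ell-a_{<r}$ and the summation is over $0\le a_i\le B_{i+1}$.

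To prove this identity I would proceed by induction on $r$, the base case $r=1$ being immediate (both sides equal $(c)_\ell$). The natural strategy is to expand the right-hand side using the Chu--Vandermonde convolution $(x+y)_n=\sum_k\binom{n}{k}(x)_k(y)_{n-k}$ applied to each factor $(ci-B_{<i})_{B_i}$, splitting the $B_i$ balls of row $i$ of $B$ into an "old" part of size $a_{i-1}$ (shifted from row $i-1$ of $A$) and a "new" part of size $n_i=B_i-a_{i-1}$ (injected). The main obstacle is that on the left the $(c)_{a_r}$ factor couples the summation variables globally through $a_r=\ell-\sum_{i<r}a_i$, so the two expressions do \emph{not} match term-by-term; a careful regrouping of the sums (or a suitable double induction) is needed to reconcile them. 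An alternative avenue, which I expect to be cleaner conceptually, is to build an enriched chain whose states are pairs $(A,L)$ with $L$ an injective labelling assigning to each ball in row $i$ a label in $\{1,\ldots,ci\}$, distinct within rows $\le i$; this yields exactly $\prod_i(ci-A_{<i})_{A_i}$ enriched states above each $A$, so that uniform stationarity on the enriched chain would give~\eqref{eq:sevjugstat} by lumping. The delicate point there is to specify label dynamics (how labels are passed along under the shift and how labels are assigned to newly injected balls) in such a way that the lumping condition~\eqref{eq:lumping} holds while the enriched chain remains doubly stochastic.
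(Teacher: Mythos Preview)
Your reduction is sound: the telescoping $(cr-A_{<r})_{A_r}\cdot\mathcal{P}_{A,B}=A_r!$ is correct, and the parametrization of predecessors of $B$ does reduce stationarity to the combinatorial identity you wrote down. However, the proposal is \emph{incomplete}: you explicitly flag the obstacle in proving the identity directly (the global coupling through $a_r=\ell-a_{<r}$ blocks a naive termwise Vandermonde), and your alternative enriched-chain avenue is only sketched, with the crucial label dynamics left unspecified. So as it stands neither route delivers a proof.

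The paper in fact takes the enriched-chain route, but with a construction different from (and cleaner than) the labelling you propose. It adds an extra row on top, making an $(r{+}1)\times c$ array, and an enriched state consists of $\ell$ \emph{arcs}, each joining a ``cross'' in some cell to a ``ball'' in a strictly lower row; cells may contain at most one cross and at most one ball. Projection to $S_{r\times c,\ell}$ simply forgets the top row, the arcs and the crosses. The transition shifts all arcs down one row; arcs whose ball falls off the bottom are deleted and, for each reinjected ball, a new arc is drawn to a uniformly chosen cross position in the (now empty) top row. The key observation is that this enriched chain is \emph{self-dual under time reversal}: running it backwards is the same chain with the roles of balls and crosses interchanged (equivalently, the array turned upside down). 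In particular the number of transitions into any enriched state equals the number of transitions out of it, so the stationary distribution is uniform. Counting enriched states above a given $A$ row by row---each of the $A_j$ balls in row $j$ must be matched with one of the $cj-A_{<j}$ cells in rows $0,\ldots,j-1$ not yet occupied by a cross---yields exactly $\prod_j (cj-A_{<j})_{A_j}$, which proves the theorem.

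This arc/time-reversal construction neatly sidesteps both difficulties you identified: there is no label dynamics to tune (the arcs are rigid data moved bodily by the shift), and the double-stochasticity follows from a symmetry rather than a computation. As a bonus, it provides a bijective proof of your combinatorial identity.
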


\begin{rem}
We have not been able to find a simple expression for the normalisation factor $Z_{r\times c,\ell}$.
\end{rem}

\begin{proof}
  We introduce an enriched chain as follows. A state in the enriched
  chain is an $(r+1)\times c$ array with $\ell$ arcs, each arc going
  between two cells that are not in the same row. For each arc, we mark
  the top cell with a cross and the bottom cell with a ball. Each cell
  can contain at most one cross and at most one ball, but could have
  one ball and one cross belonging to different arcs.  The projection
  to $S_{r\times c,\ell}$ is obtained by simply removing the top row,
  all arcs and crosses but leaving the balls in place. See Figure
  \ref{F:arcs} for all the states of the enriched chain projecting
  down to the rightmost state in Figure \ref{fig:sevjug}.

\begin{figure}[htpb]
\begin{center}
\begin{tikzpicture}  [scale=0.5, >=triangle 45]
\put (-120,0)
{\draw (0,0) grid (2,3);
\draw (1.5,0.5) node {$\bullet$};\draw (1.5,1.5) node {$\bullet$};
\draw (0.5,2.5) node {$\times$};\draw (1.5,2.5) node {$\times$};
\draw[-] (0.5,2.5) to [out = -90, in = 140] node{} (1.5,0.5);
\draw[-] (1.5,2.5) to [out = -90, in = 90] node{} (1.5,1.5);}
\put (-80,0)
{\draw (0,0) grid (2,3);
\draw (1.5,0.5) node {$\bullet$};\draw (1.5,1.5) node {$\bullet$};
\draw (0.5,2.5) node {$\times$};\draw (1.5,2.5) node {$\times$};
\draw[-] (1.5,2.5) to [out = -50, in = 50] node{} (1.5,0.5);
\draw[-] (0.5,2.5) to [out = -90, in = 180] node{} (1.5,1.5);}
\put (-40,0)
{\draw (0,0) grid (2,3);
\draw (1.5,0.5) node {$\bullet$};\draw (1.5,1.5) node {$\bullet$};
\draw (0.5,1.5) node {$\times$};\draw (1.5,2.5) node {$\times$};
\draw[-] (0.5,1.5) to [out = -90, in = 180] node{} (1.5,0.5);
\draw[-] (1.5,2.5) to [out = -90, in = 90] node{} (1.5,1.5);}
\put (0,0)
{\draw (0,0) grid (2,3);
\draw (1.5,0.5) node {$\bullet$};\draw (1.5,1.5) node {$\bullet$};
\draw (0.5,1.5) node {$\times$};\draw (0.5,2.5) node {$\times$};
\draw[-] (0.5,1.5) to [out = -90, in = 180] node{} (1.5,0.5);
\draw[-] (0.5,2.5) to [out = 0, in = 90] node{} (1.5,1.5);}
\put (40,0)
{\draw (0,0) grid (2,3);
\draw (1.5,0.5) node {$\bullet$};\draw (1.3,1.5) node {$\bullet$};
\draw (1.7,1.5) node {$\times$};\draw (1.5,2.5) node {$\times$};
\draw[-] (1.7,1.5) to [out = -90, in = 90] node{} (1.5,0.5);
\draw[-] (1.5,2.5) to [out = -90, in = 90] node{} (1.3,1.5);}
\put (80,0)
{\draw (0,0) grid (2,3);
\draw (1.5,0.5) node {$\bullet$};\draw (1.3,1.5) node {$\bullet$};
\draw (1.7,1.5) node {$\times$};\draw (0.5,2.5) node {$\times$};
\draw[-] (1.7,1.5) to [out = -90, in = 90] node{} (1.5,0.5);
\draw[-] (0.5,2.5) to [out = -90, in = 90] node{} (1.3,1.5);}
\end{tikzpicture}
\end{center}
\caption{All the states in the enriched chain projecting to the state in $S_{2\times 2,2}$ with both balls in the right column.}
\label{F:arcs}
\end{figure}
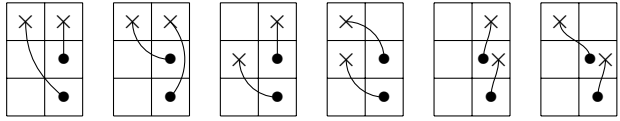

The transitions in the enriched chain are obtained by first moving all arcs, balls and crosses down one row in the array. Secondly, if there 
were balls in the bottom row, they and the corresponding arcs and crosses are removed. The balls are reinjected uniformly into the array 
except for the top row and under the condition that no two balls may be in the same cell, just like in the several jugglers Markov chain. 
For each of these balls an arc is inserted from the ball and up to a cross positioned uniformly in the top row under the condition that no two 
crosses can be in the same cell. (Alternatively we could define the transitions such that the new crosses in the top row appear in the same columns as the removed balls.)

Now we note that if we run the enriched chain backwards, it will be an identical chain with the roles of balls and crosses exchanged (turned upside down). The number of balls in the bottom row in a state is equal to the number of crosses in the top row for every state it may transition to. Also the number of ways to inject balls is the same as the number of ways of removing crosses. It follows that the number of transitions out of any state is equal to the number of transitions into the same state. Thus the stationary distribution is uniform for the enriched chain.

The uniformity of the enriched chain means that, to evaluate the
stationary probability $\pi(A)$ of a state of the several jugglers
Markov chain, it suffices to count the number of states in the
enriched chain projecting to it. For each ball we can place the cross
in any position in a row above with the constraint that no two crosses
can be in the same cell. The number of possibilities can be counted
row by row: assuming that the crosses corresponding to the balls
strictly above row $j$ have been chosen, there remains $cj-A_{<j}$
cells without crosses that may be matched with the $A_j$ balls in row
$j$, hence there are $(cj-A_{<j})_{A_j}$ possible choices for row $j$.
\end{proof}

\begin{rem}
J.S.~Kim \cite{JSK} has studied the model of a juggler with each site being allowed to contain up to a certain number $c>1$ of balls. Kim's model can be obtained from the several jugglers Markov chain by lumping.
\end{rem}

\section{Open Problems}

Several questions remain open in the multispecies juggling context.
We have not found an expression for the normalisation factor for the
juggling chain with several jugglers.  We have also not yet found a
multiparameter version for the latter model, as the possibility of
catching more than one ball at a time changes the behaviour quite
drastically. A multispecies model with several jugglers is one
possible extension of our model. From a probabilistic point of view,
it would also be natural to look at the extension to infinite models,
such as a Markov chain on the state space
$St_{n_1,\ldots,n_{T-1},\infty}$. This would contain as special
cases, the unbounded and infinite juggling models studied in \cite[Section 3]{ABCN}.

\bibliographystyle{plain}
\bibliography{multispecies}

\begin{thebibliography}{10}

\bibitem{ABCN}
Arvind Ayyer, J\'er\'emie Bouttier, Sylvie Corteel, and Fran\c{c}ois Nunzi.
\newblock Multivariate juggling probabilities.
\newblock {\em Electron. J. Probab.}, 20:no. 5, 1--29, 2015.
\newblock arXiv:1402.3752 [math.PR].

\bibitem{ABLN}
Arvind Ayyer, J\'er\'emie Bouttier, Svante Linusson, and Fran\c{c}ois Nunzi.
\newblock Some generalized juggling processes (extended abstract).
\newblock {\em DMTCS Proceedings, 27th International Conference on Formal Power
  Series and Algebraic Combinatorics (FPSAC, 2015)}, 2015.
\newblock 925-936.

\bibitem{AyyerLinusson}
Arvind Ayyer and Svante Linusson.
\newblock An inhomogeneous multispecies {TASEP} on a ring.
\newblock {\em Adv. in Appl. Math.}, 57:21--43, 2014.
\newblock arXiv:1206.0316 [math.PR].

\bibitem{BlytheEvans}
R.~A. Blythe and M.~R. Evans.
\newblock Nonequilibrium steady states of matrix-product form: a solver's
  guide.
\newblock {\em J. Phys. A}, 40(46):R333--R441, 2007.

\bibitem{Derrida1998}
B.~Derrida.
\newblock An exactly soluble non-equilibrium system: The asymmetric simple
  exclusion process.
\newblock {\em Physics Reports}, 301(1 - 3):65 -- 83, 1998.

\bibitem{DJLS93}
Bernard Derrida, Steven~A Janowsky, Joel~L Lebowitz, and Eugene~R Speer.
\newblock Exact solution of the totally asymmetric simple exclusion process:
  shock profiles.
\newblock {\em Journal of Statistical Physics}, 73(5-6):813--842, 1993.

\bibitem{EngstromLeskelaVarpanen}
Alexander Engstr{{\"o}}m, Lasse Leskel{{\"a}}, and Harri Varpanen.
\newblock Geometric juggling with q-analogues.
\newblock {\em Discrete Mathematics}, 338(7):1067--1074, 2015.
\newblock arXiv:1310.2725 [math.CO].

\bibitem{EvansHanney}
M.~R. Evans and T.~Hanney.
\newblock Nonequilibrium statistical mechanics of the zero-range process and
  related models.
\newblock {\em J. Phys. A: Math. Gen.}, 38(19):R195, 2005.
\newblock arXiv:cond-mat/0501338 [cond-mat.stat-mech].

\bibitem{FM07}
Pablo~A Ferrari and James~B Martin.
\newblock Stationary distributions of multi-type totally asymmetric exclusion
  processes.
\newblock {\em The Annals of Probability}, pages 807--832, 2007.
\newblock arXiv:math/0501291 [math.PR].

\bibitem{JSK}
Jang~Soo Kim.
\newblock Multivariate multiplex juggling probabilities, 2015.
\newblock Private communication.

\bibitem{LevinPeresWilmer}
David~A. Levin, Yuval Peres, and Elizabeth~L. Wilmer.
\newblock {\em Markov Chains and Mixing Times}.
\newblock American Mathematical Society, Providence, RI, 2009.
\newblock With a chapter by James G. Propp and David B. Wilson.

\bibitem{LinussonMartin}
Svante Linusson and James Martin.
\newblock Stationary probabilities for an inhomogeneous multi-type {TASEP}.
\newblock in preparation.

\bibitem{Warrington}
Gregory~S. Warrington.
\newblock Juggling probabilities.
\newblock {\em The American Mathematical Monthly}, 112(2):pp. 105--118, 2005.
\newblock arXiv:math/0501291 [math.PR].

\end{thebibliography}

\end{document}